\theoremstyle{plain}
\newtheorem*{thm*}{Theorem}
\newtheorem{thm}{Theorem}[section]
\newtheorem{lemma}[thm]{Lemma}
\newtheorem*{lemma*}{Lemma}
\newtheorem{corollary}[thm]{Corollary}
\newtheorem*{corollary*}{Corollary}
\newtheorem{prop}[thm]{Proposition}
\newtheorem*{prop*}{Proposition}
\newtheorem{defn}{Definition}
\newtheorem*{conjecture*}{Conjecture}
\newcommand{\R}{\mathbb{R}}
\newcommand{\N}{\mathbb{N}}
\newcommand{\1}{\mathbbm{1}}
\title{Which Sampling Densities are Suitable for Spectral Clustering on Unbounded Domains?}
\author{Henry-Louis de Kergorlay
\thanks{%
           School of Mathematics,
           University of Edinburgh,
           Edinburgh, EH9 3FD, UK
           (\texttt{hdekerg@ed.ac.uk}).
           }
}
\begin{document}

\maketitle
\begin{abstract}
We consider a random geometric graph with vertices sampled from a probability measure supported on $\R^d$, and study its connectivity. We show the graph is typically disconnected,
unless the sampling density has superexponential decay. In the later setting, we identify an asymptotic threshold value for the radius parameter of the graph such that, for radius values beyond the threshold, some concentration properties hold for the sampled points of the graph, while the graph is disconnected for radius values below the same threshold. Properties of point processes are well-known to be closely related to the analysis of geometric learning problems, such as spectral clustering. This work can be seen as a first step towards understanding the consistency of spectral clustering when the probability measure has unbounded support. In particular, we narrow down the setting under which spectral clustering algorithms on $\R^d$  may be expected to achieve consistency, to a sufficiently fast decay of the sampling density (superexponential) and a sufficiently slowly decaying radius parameter value as a function of $n$, the number of sampled points. 
\end{abstract}

\section{Introduction}\label{sec: intro}
The goal of this work is twofold. One objective is to study the connectivity of random geometric graphs on $\R^d$, and understand how such connectivity properties are affected by changing the decay of the tail of the sampling density. The second objective, which motivates the first one, is to uncover an appropriate setting under which one may be able to extend consistency results for geometric learning problems such as spectral clustering (e.g., \cite{consistencySpectralClustering,variationalApproach,optimalCheeger}), from bounded to unbounded domains. 

Random geometric graphs and their properties have been widely studied, with works originating in \cite{randomPlaneNetworks,sphereRandomCovering}. Connectivity properties of random geometric graphs are well understood in the setting where the domain is bounded (\cite{randomGraphsConnectivity,randomGraphsConnectivityStrongLaw}). See \cite{randGraphs} for a classic exposition on random geometric graphs, and \cite{randomGraphsSurvey} for a survey on the topic. In the setting where the domain is $\R^d$, both \cite{randGraphs} and \cite{gaussianNetworkConnectivity} investigate the special case where the sampling density is a Gaussian, and identify sharp connectivity thresholds. However, to our knowledge, random geometric graphs with other sampling densities on $\R^d$ have not been studied, and it is not known how the connectivity of the graphs are affected by the choice of sampling density on $\R^d$.


Clustering (i.e., grouping data points according to some affinities) is a central task in machine learning. Among the many procedures available for this task, spectral clustering is widely used and has proven highly efficient in unsupervised learning. See \cite{spectralClusteringTutorial} for an introductory survey. Given a learning algorithm, a natural and important problem is that of determining its consistency as the number of sampled points $n\to \infty$, so as to provide a guarantee of accuracy of the results for sufficiently large $n$. In the case of clustering algorithms, consistency asks that the obtained labelling of the points converges in some way to an underlying canonical partition of the domain, which one would be able to infer if given full information of the probability measure (rather than a sample). Consistency of spectral clustering may be shown by establishing spectral convergence of a graph Laplacian to an underlying continuous Laplacian on the domain (\cite{consistencySpectralClustering}). While the consistency theory of spectral clustering is well-understood when the domain is bounded (e.g., \cite{variationalApproach,errorEstimates}), little theory is known when the sampling density has unbounded support, despite this being a common scenario in practice.

\section*{Acknowledgements}
I would like to thank Desmond Higham for helpful comments on a previous version of this manuscript. This work was supported by the Engineering and Physical Sciences Research Council grant EP/P020720/1.
\section{Properties of Point Processes and Geometric Learning Problems}\label{sec: properties of geometric point processes}
Properties of random point processes are well-known to contain valuable information with regards to understanding the consistency theory of machine learning problems. For instance, when the domain is bounded, connectivity thresholds for random geometric graphs (\cite{randomGraphsConnectivity}) provide lower thresholds for the range of asymptotically admissible values for the radius parameter in spectral clustering algorithms (e.g., \cite{spectralClusteringTutorial,variationalApproach}). Indeed, if the graph is disconnected, then $\lim_n dim(ker(\Delta^{(n)}))>1=dim (ker(\Delta))$, where $\Delta^{(n)}$ and $\Delta$ denote respectively an underlying graph Laplacian and continous Laplacian, hence the graph Laplacian fails to converge spectrally to the continuous Laplacian.

Another example of relevance here is \cite{topoCrackle}, where the authors study limits of random point processes on $\R^d$ under general geometric constraints. A particular application they derive from their results, is the analysis of Betti numbers of random \v Cech complexes on $\R^d$, deciding whether one can learn the topology of a manifold from points sampled with unbounded noise (see also \cite{crackle}). If the tail of the sampling density does not decay sufficiently fast (not faster than exponential), then the authors show the emergence of noisy cycles, assuming no or very mild asymptotic conditions are satisfied; thus showing that one cannot recover the topology of a manifold in these cases. On the other hand, if the decay of the tail is faster than exponential, they provide asymptotic conditions to guarantee that the union of the random balls centered around the sampled points is contractible. This implies in particular the connectivity of the induced random geometric graph in this setting, and we may wonder how sharp the asymptotic conditions provided in \cite{topoCrackle} are, with regards to connectivity. Is there a sharp connectivity threshold, such that the graph is disconnected with high probability (w.h.p.) for radius values (the radius $r_n$ such that two points are connected by an edge if their distance is less than $r_n$) below the threshold, while the graph is connected w.h.p.\ for radius values beyond the same threshold? What happens to the connectivity of the graph if the decay of the tail is exponential or slower? 

These natural questions motivate us to proceed under a similar setting to \cite{topoCrackle}, considering various decays for the tail of the sampling density by means of regular varying functions. Our main objective is to understand the connectivity of random geometric graphs on $\R^d$, in order to gain new insights for a theory of consistency of spectral clustering on $\R^d$. We are thus interested in identifying (and discarding) regimes where graphs are disconnected with high probability, in which setting there is little hope for spectral clustering to achieve consistency (see \cite{spectralClusteringTutorial}, where it is generally advised against performing spectral clustering on disconnected graphs). We are also interested in identifying other regimes where concentration properties hold for the sampled points. These properties should resemble the concentration properties which were successfully used in previous works studying the consistency of spectral clustering (or other gemoetric learning problems), where the domain was bounded. Such concentration properties for the sampled points appear as a necessary component in order to show consistency of many geometric learning problems, as they allow one to approximate a continuous operator by a discrete one, something which is often required when investigating consistency. See for instance the use of the $\infty$-Wassertein distance in \cite{variationalApproach,errorEstimates} or Lemma $3.2$ in \cite{optimalCheeger}.

In this work, we find a dichotomy between the setting where the decay of the sampling density is superexponential (e.g., a Gaussian), and settings where the decay is slower. This dichotomy is in agreement with the results found in \cite{topoCrackle}. We show that random geometric graphs are disconnected with high probability under no or mild constraints on the radius parameter of the graph, if the decay of the tail of the sampling density is exponential or slower. On the other hand if the decay is superexponential, we identify an asymptotic threshold below which the graph is disconnected with high probability (answering a question raised in \cite{topoCrackle} about the sharpness of their asymptotic conditions for the contractibility of the random balls), while concentration properties for the sampled points are shown to hold for radius values beyond the same threshold.

\begin{table}
\caption{Notation table}
\begin{tabularx}{\textwidth}{@{}XX@{}}
\toprule
   $d\geq 2$ & Ambient dimension\\
   $q$ &  Sampling density supported on $\R^d$\\
   $\mathcal P_n$ & Poisson point process sampled with respect to $q$\\
   $r_n$ & Radius parameter of the graph\\
   $G(\mathcal P_n,r_n)$ & Random geometric graph\\

  \\
  \textbf{Symbols} \\
  \\
  $A_n$ is true with high probability (w.h.p.) & $\lim_{n\to \infty}\mathbb P(A_n)=1$\\
  \\
  $f(n)\sim g(n)$ & $\lim_{n\to \infty}\frac{f(n)}{g(n)}=1$\\
   \\
  $f(n)\lesssim g(n)$ & $\exists\ C>0,\ \exists\ n_0\in \N,\ \forall\ n\geq n_0,$\\ & $f(n)\leq C g(n)$ \\
   \\
  $f(n)=O(g(n))$
  & $\exists\ C>0,\ \exists\ n_0\in \N,\ \forall\ n\geq n_0,$\\ &$|f(n)|\leq C g(n)$ \\
   \\
  $f(n)=\Omega(g(n))$ & $\exists\ C>0,\ \exists\ n_0\in \N,\ \forall\ n\geq n_0,$\\ 
  &$f(n)\geq C g(n)$ \\
  \\
  $f(n) = \Theta (g(n))$ & $\exists\ C_2,C_1>0,\ \exists\ n_0\in \N,\ \forall\ n\geq n_0,$\\ 
  &$C_1 g(n)\leq f(n)\leq C_2 g(n)$ \\
  \\
  $f(n)=o(g(n))$ & $\forall\ C>0,\ \exists\ n_0\in \N,\ \forall\ n\geq n_0,$\\
  &$|f(n)|\leq C g(n)$\\
  \\
  $f(n)=\omega(g(n))$ & $\forall\ C>0,\ \exists\ n_0\in \N,\ \forall\ n\geq n_0,$\\
  &$|f(n)|\geq C |g(n)|$\\
\bottomrule
\end{tabularx}
\end{table}

\section{Setting}
\subsection{Random geometric graphs}

Let $d\geq 2$ and let $\nu$ be a probability measure supported on $\R^d$. We consider a homogenous Poisson point process $\mathcal P_n$ of intensity $n\nu$, sampled with respect to $\nu$. That is, 
$$
\mathcal P_n=\{x_1,\dots,x_N\},
$$
where $N\sim Po(n)$ is independent from the points $x_i$'s, and the $x_i$'s are i.i.d. samples with respect to $\nu$.

Given a random point cloud $\mathcal P_n$, and a radius $r_n>0$, also known as the bandwidth parameter, we can form the random geometric graph $G(\mathcal P_n,r_n)$, where two points of $\mathcal P_n$ are connected by an edge if their distance is less than $r_n$. We will assume that $r_n\leq 1$ and that $r_n$ is non-increasing as a function of $n$.

In practice, it is common to assign weights to the edges. This is done via a kernel function $\eta$ satisfying a few properties.

\begin{defn}\label{defn: radial}
\begin{itemize}
\item
A function $f:\R^d\to \R$ is called \textbf{radial} (or isotropic, or symmetric), if 
$$
\forall (x,y)\in \big(\R^d\big)^2,\ |x|=|y|\Rightarrow f(x)=f(y).
$$
\item Given a radial function $f:\R^d\to \R$, define its radial profile to be the function $\textbf{f}:\R_+\to \R$ satisfying
$$
\forall x\in \R^d,\ \textbf{f}(|x|)=f(x).
$$
\end{itemize}
\end{defn}

We may then define the weighted edges of the graph $G(\mathcal P_n,r_n)$ as\\ $\{\eta_r(x-y)\ |\ x,y\in \mathcal P_n,\ x\neq y\}$, where $\eta:\R^d\to \R_+$ is radial and $\eta_r(x):=r^{-d}\eta(x/r).$  Let us assume furthermore that $\bm{\eta}(0)>0$, and that $\bm{\eta}$ is non-increasing, continuous almost everywhere and compactly supported.

The most basic kernel choice is $\eta(z):=\1(|z|<c)$, for some constant $c>0$. We then recover unweighted random geometric graphs, which are exactly the graphs studied in \cite{randGraphs}. Another typical choice is for $\eta$ to have an inverse exponential shape. Provided the above conditions are satisfied, the particular choice of $\eta$ is not important and does not impact the results of this work. We shall thus assume from now on, without loss of generality, that the graph $G(\mathcal P_n,r_n)$ is unweighted.

\subsection{Regularly varying functions}
A classic approach to modelling probability measures of unbounded support is to use regularly varying functions. They provide a flexible framework, allowing us to span through the main different tail behaviors on $\R^d$ (see Section~\ref{sec: probability measures}),
and possess convenient properties which facilitate rigorous analysis. 
\begin{defn}\label{regularly varying on R}
Regularly varying functions on $\R_+$ are defined as follows.
\begin{itemize}
    \item We say that a function $f:\R_+^*\to \R_+^*$ is \textbf{regularly varying} (at $\infty$) with exponent $\alpha\in \R$, and we write $f\in RV_{\alpha}(\R_+^*;\R_+^*)$ or simply $f\in RV_{\alpha}$ when clear from context, if for all $t >0$
    $$
    \lim_{R\to \infty}\frac{f(tR)}{f(R)}=t^\alpha.
    $$
    \item If the exponent in the definition above is $\alpha=0$, we say that the function is \textbf{slowly varying}.
\end{itemize}
\end{defn}
  
  Following a standard convention, we shall denote slowly varying functions by $L$.
 Regularly varying functions satisfy several useful key properties. 
 
\begin{thm}[Characterization theorem for regularly varying functions (\cite{regularlyVaryingSequences})]\label{characterization thm}
For every regularly varying function $f$ on $\R_+$, there exist a slowly varying function $L$ and $\alpha\in \R$ such that
$$
\forall\ t>0,\ f(t)=L(t)t^\alpha.
$$
\end{thm}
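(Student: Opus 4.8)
The plan is to take the obvious candidate for $L$, namely $L(t) := f(t)\,t^{-\alpha}$, verify that it is slowly varying, and observe that the identity $f(t) = L(t)\,t^\alpha$ then holds by construction. Since $f$ is assumed regularly varying, it already comes equipped with an exponent $\alpha \in \R$ (this is part of Definition~\ref{regularly varying on R}), so there is nothing further to produce on that front; the whole content is to exhibit $L$ and check it lies in $RV_0$.

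Concretely, I would fix $f \in RV_\alpha$ and set $L(t) := f(t)/t^\alpha$ for $t>0$. This is a well-defined map $\R_+^* \to \R_+^*$, because $f$ takes values in $\R_+^*$ and $t^\alpha > 0$. Then, for each fixed $t > 0$, I would compute
$$
\frac{L(tR)}{L(R)} = \frac{f(tR)}{(tR)^\alpha}\cdot\frac{R^\alpha}{f(R)} = \frac{f(tR)}{f(R)}\cdot t^{-\alpha},
$$
and let $R \to \infty$. By the definition of $RV_\alpha$ the first factor converges to $t^\alpha$, so the product converges to $t^\alpha\cdot t^{-\alpha} = 1$. Hence $L \in RV_0$, i.e.\ $L$ is slowly varying, and $f(t) = L(t)\,t^\alpha$ for all $t > 0$, which is exactly the assertion.

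There is no genuine obstacle here: the statement is essentially a reformulation of the definition, and the only computation required is the one-line limit above. (The deeper companion results in Karamata's theory — the uniform convergence theorem and the integral representation of slowly varying functions — are not what is being claimed. If one instead began from the weaker hypothesis that $\lim_{R\to\infty} f(tR)/f(R)$ merely exists in $(0,\infty)$ for every $t$, the substantive step would be to show this limit function $g$ satisfies the multiplicative Cauchy equation $g(st) = g(s)\,g(t)$ and hence, being a pointwise limit of measurable functions, equals $t^\alpha$ for a unique $\alpha$; but the definition adopted in the excerpt already fixes $\alpha$, so this extra argument is unnecessary.)
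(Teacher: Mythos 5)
Your proof is correct: with the definition of $RV_\alpha$ adopted in the paper (where the exponent $\alpha$ is part of the data), setting $L(t) := f(t)t^{-\alpha}$ and computing $L(tR)/L(R) = \bigl(f(tR)/f(R)\bigr)t^{-\alpha} \to 1$ is exactly the right one-line verification, and the paper itself offers no proof, citing the result from the literature instead. Your closing remark is also the right caveat to make --- the substantive version of Karamata's characterization theorem starts from the weaker hypothesis that the limit merely exists and is positive, and the work lies in showing the limit function must be a power $t^\alpha$; under the paper's stronger definition that content evaporates and the statement reduces to the computation you gave.
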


\begin{thm}[\cite{regularVariation}]
Suppose that $f\in RV_\alpha$, $\alpha\neq -1$, that $f$ is differentiable everywhere and $f'$ is eventually monotone, then $f'\in RV_{\alpha-1}.$
\end{thm}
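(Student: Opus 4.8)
The statement is (one direction of) the classical Monotone Density Theorem, and the plan is to run the standard ``sandwich'' argument that pins down the asymptotics of $g(x):=xf'(x)/f(x)$. First I would invoke Theorem~\ref{characterization thm} to write $f(x)=x^{\alpha}L(x)$ with $L$ slowly varying, so that $f'(x)=g(x)L(x)x^{\alpha-1}$; it then suffices to prove $g(x)\to\alpha$ as $x\to\infty$, because a function tending to the nonzero constant $\alpha$ times a slowly varying function is again slowly varying, whence $f'\in RV_{\alpha-1}$. This last step, together with the integral representations below, is where the exponent restriction bites: for $\alpha=0$ with $f$ tending to a positive constant the conclusion genuinely fails, and $\alpha=-1$ is the borderline for the behaviour of the tail; the hypothesis $\alpha\neq-1$ is what lets one pick the right representation cleanly.

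Next I would use eventual monotonicity of $f'$ together with $f\in RV_\alpha$ to fix the sign of $f'$ and represent $f$ as an integral of $f'$. If $\alpha>0$ then $f\to\infty$, so an eventually monotone $f'$ must be eventually nonnegative, and $f(x)=f(x_0)+\int_{x_0}^{x}f'(t)\,dt$ for $x_0$ large (the integral exists because an eventually monotone function is locally Riemann integrable, and equals the increment of $f$ by Newton--Leibniz). If $\alpha<0$ then $f\to 0$, so $f'$ is eventually nonpositive, and $\int_{x}^{\infty}(-f'(t))\,dt=f(x)$ for $x$ large.

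The core of the argument, and the part I expect to require the most care, is the sandwich step. Fix $\lambda>1$ and take $x$ large enough that $f'$ is monotone on $[x,\lambda x]$; integrating the resulting two-sided bound for $f'$ over that interval puts $f(\lambda x)-f(x)$ (in the case $\alpha>0$) between $(\lambda-1)x\min\{f'(x),f'(\lambda x)\}$ and $(\lambda-1)x\max\{f'(x),f'(\lambda x)\}$. Dividing by $f(x)$, rewriting $xf'(\lambda x)/f(x)=\lambda^{-1}g(\lambda x)\cdot f(\lambda x)/f(x)$, using $f(\lambda x)/f(x)\to\lambda^{\alpha}$ (since $f\in RV_\alpha$) together with $\limsup_x g(\lambda x)=\limsup_x g(x)$, and letting $x\to\infty$, one obtains
$$
\ell(\lambda)\ \le\ \liminf_{x\to\infty}g(x)\ \le\ \limsup_{x\to\infty}g(x)\ \le\ u(\lambda),
$$
where $\ell(\lambda)$ and $u(\lambda)$ are the two quantities $\tfrac{\lambda^{\alpha}-1}{\lambda-1}$ and $\tfrac{\lambda(\lambda^{\alpha}-1)}{(\lambda-1)\lambda^{\alpha}}$, in whichever order is forced by $f'$ being eventually non-increasing or non-decreasing. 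Since both tend to $\alpha$ as $\lambda\downarrow1$ (they are difference quotients of $t\mapsto t^{\alpha}$ and $t\mapsto t-t^{1-\alpha}$ at $t=1$), we conclude $g(x)\to\alpha$. The case $\alpha<0$ is handled identically, with $f(\lambda x)-f(x)$ replaced by $f(x)-f(\lambda x)=\int_{x}^{\lambda x}(-f')$, which is positive for large $x$ since $f(\lambda x)/f(x)\to\lambda^{\alpha}<1$. The genuinely fiddly points are tracking which endpoint supplies the upper versus the lower bound under each sign/monotonicity pattern of $f'$, checking that the divisions by $f(x)$ (and by $f(x)-f(\lambda x)$) are legitimate, and verifying that the two bounds really do pinch to $\alpha$ as $\lambda\downarrow1$; everything else is routine bookkeeping.
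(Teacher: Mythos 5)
The paper does not prove this statement at all; it is quoted from \cite{regularVariation}, so there is no in-paper argument to compare against. Your sketch is the standard monotone density argument from the regular variation literature, and its core is sound: setting $g(x)=xf'(x)/f(x)$, using the fundamental theorem of calculus on $[x,\lambda x]$ together with eventual monotonicity of $f'$ to trap $\bigl(f(\lambda x)-f(x)\bigr)/f(x)$ between $(\lambda-1)g(x)$ and $\tfrac{\lambda-1}{\lambda}\,g(\lambda x)f(\lambda x)/f(x)$, and sending $x\to\infty$ before $\lambda\downarrow 1$ does yield $g(x)\to\alpha$, whence $f'(tx)/f'(x)=\bigl(g(tx)/g(x)\bigr)\bigl(f(tx)/f(x)\bigr)t^{-1}\to t^{\alpha-1}$ provided $\alpha\neq 0$. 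The sign bookkeeping you defer is unproblematic: for $\alpha>0$ an eventually monotone $f'$ must be eventually strictly positive (else $f\not\to\infty$), for $\alpha<0$ eventually strictly negative and non-decreasing to $0$ (else $f$ could not be positive and tend to $0$), so all divisions are legitimate, and the two candidate bounds $\tfrac{\lambda^{\alpha}-1}{\lambda-1}$ and $\tfrac{\lambda-\lambda^{1-\alpha}}{\lambda-1}$ both tend to $\alpha$ as $\lambda\downarrow 1$.

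The one point that needs fixing is the role of the excluded exponent. Your argument proves the result for every $\alpha\neq 0$, \emph{including} $\alpha=-1$ (there $g\to-1\neq 0$ and nothing degenerates), and it genuinely fails only at $\alpha=0$ --- as you yourself observe with $f$ tending to a positive constant; concretely, $f(x)=1+e^{-x}$ is slowly varying with everywhere monotone derivative, yet $|f'(x)|=e^{-x}\notin RV_{-1}$. So your closing claim that ``the hypothesis $\alpha\neq-1$ is what lets one pick the right representation cleanly'' is not accurate: $\alpha=-1$ causes no difficulty in this proof, whereas the statement as printed (which permits $\alpha=0$) cannot be proved because it is false there. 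The exclusion $\alpha\neq-1$ presumably belongs to the integrated (Karamata) form of the theorem in the cited source; the hypothesis your proof actually requires, and the only regime in which the paper ever invokes the result (always for $\psi\in RV_v$ with $v>0$), is $\alpha\neq 0$. State that condition explicitly rather than trying to make $\alpha\neq-1$ do work it cannot do. A final cosmetic remark: for $\alpha<0$ the conclusion should be read as $|f'|\in RV_{\alpha-1}$, since Definition~\ref{regularly varying on R} requires positive values.
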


The notion of regular variation may be extended to $\R^d$, however we focus on radial sampling densities which, as functions supported on $\R^d$, are regularly varying if and only if their radial profile, supported on $\R_+$, is regularly varying. Hence the above definitions suffice for the set up considered in this work.


\subsection{Probability measures on $\R^d$}\label{sec: probability measures}
Let us assume that the probability measure $\nu$ from which we sample the point process $\mathcal P_n$ has a sampling density, denoted by $q$, which we shall always assume to be radial (see Definition~\ref{defn: radial}) and supported on $\R^d$. By a slight abuse of notation, we shall use $q$ to denote either the sampling density itself or its radial profile $\bf q$. We feel that this ambiguity is clear from context and facilitates the fluidity of the reading.

The first case we consider is when the point process $\mathcal P_n$ is sampled with respect to a heavy tail density.
\begin{defn}[Heavy tail]\label{heavy tail}
Let $q:\R^d\to \R_+^*$ be a radial sampling density. We say that $q$ has a \textbf{heavy tail} if there exists $\alpha>d$ such that $q\in RV_{-\alpha}$.
\end{defn}
A typical example of a heavy tail density is the power law
$$
q(x):=\frac{C}{1+|x|^\alpha},\ x\in \R^d,\ \alpha>d.
$$

We also consider cases where the sampling density has a lighter tail than heavy tails. Following standard set ups in extreme value theory (e.g., \cite{highRiskExtreme,topoCrackle}), we define light tail densities by means of regular varying and von Mises functions.

\begin{defn}
Given a function $\psi\in C^2(\R_+;\R_+)$, we say that it is of \textbf{von Mises type} if for all $z\to \infty,$
$$
\begin{cases}

 \psi'(z)>0,\\
 \psi(z)\to \infty,\\
 a'(z)\to 0, \text{ where } a(z):=\frac{1}{\psi'(z)}.

\end{cases}
$$

\end{defn}
If $\psi$ is of von Mises type, then $\psi'>0$ holds true asymptotically, hence $\psi$ has a well-defined inverse, asymptotically.
\begin{defn}
Given a function $\psi\in C^2(\R_+;\R_+)$ of von Mises type, denote by $\psi^\leftarrow$ any function that is asymptotically the inverse of $\psi$.
\end{defn}

\begin{defn}[Light tail]\label{defn: light tail}
We say that a radial sampling density $q:\R^d\to \R_+^*$ has a \textbf{light tail} if it can be written in the form 
$$
q(x):=L(|x|)\exp\big(-\psi(|x|)\big),\ x\in \R^d,
$$
where $\psi\in C^2(\R_+;\R_+)$ is of von Mises type,  $\psi'$ is eventually monotone,  and $L\in RV_\alpha$ for some $\alpha\in \R$.



\end{defn}

Note that we may rewrite a light tail density as
$$
q(x)=\exp\left(-(\psi(|x|)-\log (L(|x|)))\right),
$$
and $(L\in RV_\alpha\text{ and }\psi\in RV_v,\ v>0) \Rightarrow \psi(|x|)-\log (L(|x|))\in RV_v, $ hence the tail behavior of $q$ is governed by $\psi$, while the behavior of $L$ becomes asymptotically negligible. Thus without loss of generality, it will suffice to consider light tail densities of the form $q(x)=C\exp(-\psi(|x|))$
, where $C$ is a suitable renormalizing constant. We may relax the conditions imposed on $L$ by seeking the weakest constraints which prevent $L$ from growing exponentially, however we feel that such extra care is superfluous here and does not bring further insight into the overall argument, while adding extra technicalities. Hence we have opted for the simple, yet general requirement that $L\in RV_\alpha$, in our definition of a light tail density. The extra assumption that $\psi'$ is eventually monotone is due to some technicalities required in some of the arguments below, and is not necessary for all cases of sampling densities considered.

\begin{defn}(Decays)\label{defn: decays}
For clarity of the results presented below, it is helpful to organize the different cases of sampling densities defined above into three distinct categories, depending on their decay.
\begin{itemize}
\item We say that a sampling density has \textbf{subexponential decay} if it has a heavy tail or if it has a light tail with $\psi\in RV_v$ with $v\in (0,1)$.
\item We say that the density has \textbf{exponential decay} if it has a light tail with $\psi\in RV_1$.\\
\item We say that the density has \textbf{superexponential decay} if it has a light tail with $\psi\in RV_v$ with $v>1$.
\end{itemize}
\end{defn}


\section{Outline of the results}\label{sec: outline of the results}

In this section we summarize the main results of this work. Going through the different decays for a sampling density defined above, we find conditions on the radius parameter such that random geometric graphs are disconnected w.h.p., or on the other hand in certain cases, such that some concentration inequalities hold.

Recall that given a sampling density $q$, $\mathcal P_n$ is a homogeneous Poisson point process of intensity $nq$ sampled with respect to $q$, and $G(\mathcal P_n,r_n)$ is the random geometric graph, where the radius $r_n\leq 1$ is non-increasing in $n$, and two points of $\mathcal P_n$ are connected by an (undirected) edge if their distance is less than $r_n$. The main results of this work are gathered in the three theorems below.

\begin{thm}[Subexponential decay]\label{thm: subexponential}

     Suppose that $q$ has subexponential decay, then w.h.p. $G(\mathcal P_n,1)$ is disconnected.
     \end{thm}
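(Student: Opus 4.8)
The plan is to show that, with high probability, a bounded region far out in the tail of $q$ contains a point of $\mathcal P_n$ while its $1$-neighbourhood contains no other point; that region is then a connected component of $G(\mathcal P_n,1)$ disjoint from the bulk, so the graph is disconnected. The right place to look is the radius $T_n$ at which $q$ has decayed to order $1/n$: there the expected number of points of $\mathcal P_n$ in any set of bounded volume is of order $1$, which is exactly what makes the separation event above have probability bounded away from $0$.

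First I would fix such a scale. Since $q>0$ everywhere and (writing also $q$ for its radial profile) $q(r)\to 0$ as $r\to\infty$ in both subexponential regimes — for a heavy tail because $q\in RV_{-\alpha}$ with $\alpha>0$, for a light subexponential tail because $q(r)=Ce^{-\psi(r)}$ with $\psi(r)\to\infty$ — one can pick $T_n\to\infty$ with $q(T_n)\asymp 1/n$; for a light tail one may simply take $T_n:=\psi^{\leftarrow}(\log(Cn))$, so that $q(T_n)=1/n$. The quantitative input I need is the uniform estimate
$$
q(y)\ \asymp\ q(T_n)\ \asymp\ \tfrac1n\qquad\text{whenever }\ \big|\,|y|-T_n\,\big|\le 7 ,
$$
which follows from the uniform convergence theorem for regularly varying functions in the heavy-tail case, and in the light subexponential case from $\psi'\in RV_{v-1}$ with $v<1$ (so $\psi'(r)\to 0$), so that $\psi$ changes by $o(1)$ over windows of bounded width. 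In particular $n\,\nu(B(z,\rho))$ lies between two positive constants for every $\rho\in\{1/4,5/4\}$ and every $z$ with $|z|\in[T_n,T_n+5]$, for all $n$ large, where $\nu$ denotes the probability measure with density $q$.

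Then I would run a direct independence argument. Let $z_1,\dots,z_{k_n}$ be a maximal $3$-separated subset of the shell $\{x:|x|\in[T_n,T_n+5]\}$; a standard volume/packing bound gives $k_n\gtrsim T_n^{\,d-1}\to\infty$, and the balls $B(z_i,5/4)$ are pairwise disjoint. Let $E_i$ be the event that $\mathcal P_n\cap B(z_i,1/4)\neq\emptyset$ and $\mathcal P_n\cap\big(B(z_i,5/4)\setminus B(z_i,1/4)\big)=\emptyset$. On $E_i$ any point of $\mathcal P_n$ lying within distance $1$ of a point of $B(z_i,1/4)$ lies in $B(z_i,5/4)$, hence — the annulus being empty — in $B(z_i,1/4)$; and since $\diam B(z_i,1/4)=1/2<1$, the nonempty set $\mathcal P_n\cap B(z_i,1/4)$ is a connected component of $G(\mathcal P_n,1)$. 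The events $E_i$ depend on the restrictions of $\mathcal P_n$ to the pairwise disjoint balls $B(z_i,5/4)$, hence are independent, and
$$
\PP(E_i)=\big(1-e^{-n\,\nu(B(z_i,1/4))}\big)\,e^{-n\,\nu(B(z_i,5/4)\setminus B(z_i,1/4))}\ \ge\ \delta
$$
for some constant $\delta>0$ and all $n$ large, by the estimate above. Therefore $\PP\big(\bigcup_i E_i\big)\ge 1-(1-\delta)^{k_n}\to 1$. On $\bigcup_i E_i$ the graph is disconnected provided $\mathcal P_n$ is not contained in a single $B(z_i,1/4)$; but $n\,\nu(B(0,1))\to\infty$, so with high probability $\mathcal P_n$ meets $B(0,1)$, which is disjoint from every $B(z_i,1/4)$ once $T_n>2$. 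Intersecting these two high-probability events proves the theorem.

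The technical heart is the uniform density estimate on a bounded-width shell around $T_n$, where the regular-variation hypotheses (and, for the light case, the eventual monotonicity of $\psi'$, via the differentiation theorem for $RV$ functions) are used; everything else — independence of a Poisson process over disjoint sets, a packing bound, and $n\,\nu(B(0,1))\to\infty$ — is soft. The only other point requiring a little care is that the possibly-exceptional set where $q$ dips below $1/n$ at an atypically small radius is irrelevant for $n$ large, so $T_n$ is eventually governed by the tail alone.
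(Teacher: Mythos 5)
Your argument is correct, and it is a genuinely different route from the paper's. The paper proves Theorem~\ref{thm: subexponential} through the general scheme of Section~\ref{disconnected strategy}: it chooses radii $R_n^{(0)}\leq R_n^{(1)}$, shows by explicit integral estimates that the expected number of isolated vertices in $B(0,R_n^{(0)})$ diverges (Proposition~\ref{prop: heavy tail R0} for heavy tails, Proposition~\ref{R_n^(0) subexp} for light subexponential tails), invokes Lemma~\ref{large component upper bound} (imported from the Gaussian case) to confine the component of the origin to $B(0,R_n^{(0)})$, and separately shows a point lies beyond $R_n^{(1)}$ w.h.p.\ (Proposition~\ref{prop: heavy tail R1}, Proposition~\ref{R_n^(1) light tail asymptotic}). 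You instead work locally at the scale $T_n$ where $nq\asymp 1$: you plant $\Theta(T_n^{d-1})$ disjoint balls in a constant-width shell, observe that each produces an isolated clump with probability bounded below, and use the independence of the Poisson process over disjoint sets plus one extra point near the origin. This is more elementary and self-contained (no first-moment integrals, no analogue of Lemma~\ref{large component upper bound}), and it treats the heavy-tail and light subexponential cases uniformly, since all you need is that $q$ is asymptotically flat on shells of bounded width --- which is exactly what subexponential decay gives via the uniform convergence theorem in the $RV_{-\alpha}$ case and via $\psi'\in RV_{v-1}\to 0$ (using the eventual monotonicity of $\psi'$) in the light case. What the paper's heavier machinery buys is that the same moment computations extend to the exponential and superexponential regimes with $r_n\to 0$, where your constant-probability local events no longer exist, and they produce the quantitative thresholds used later. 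Two small points to tidy up if you wrote this in full: the existence of $T_n$ with $q(T_n)\asymp 1/n$ for a possibly non-monotone regularly varying density needs a short crossing argument (e.g.\ dyadic windows plus uniform convergence, or the fact that negative-index $RV$ functions are asymptotic to non-increasing ones); and for light tails you should either take a genuine inverse of $\psi$ (which exists for large arguments since $\psi\in C^2$ with eventually positive derivative) or note that an asymptotic inverse only gives $q(T_n)=n^{-1+o(1)}$, and also carry the slowly varying prefactor $L$ through the shell estimate rather than silently setting $q=Ce^{-\psi}$ --- the same reduction the paper makes, so this is a gloss, not a gap.
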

     
     Since $r_n\leq 1$, if $q$ has subexponential decay, then every random geometric graph is disconnected with high probability in this setting.

\begin{thm}[Exponential decay]\label{thm:exponential}
     Suppose that $q$ has exponential decay and 
    $$
    r_n\psi'(\psi^{\leftarrow}(\log n)) =O(1),
    $$
    then w.h.p. $G(\mathcal P_n,r_n)$ is disconnected.
\end{thm}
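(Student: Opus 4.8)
\emph{Proof strategy.} The plan is to establish the stronger fact that $G(\mathcal P_n,r_n)$ has an isolated vertex with high probability, which immediately yields disconnectedness. Two reductions come first. By the remark in Section~\ref{sec: probability measures} we may assume $q(x)=C\exp(-\psi(|x|))$. And because deleting edges cannot create connectivity, we may replace $r_n$ by the largest radius the hypothesis permits, $\rho_n:=\min\{1,\,K/\psi'(\psi^{\leftarrow}(\log n))\}$ (with $K$ the constant in the assumed bound $r_n\psi'(\psi^{\leftarrow}(\log n))=O(1)$): proving $G(\mathcal P_n,\rho_n)$ disconnected w.h.p.\ suffices since $r_n\le\rho_n$. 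The gain is that $\rho_n$ is not too small: since $\psi\in RV_1$ and $\psi'$ is eventually monotone, the differentiation theorem for regularly varying functions gives $\psi'\in RV_0$, so $\psi'(\psi^{\leftarrow}(\log n))=(\log n)^{o(1)}$ and hence $n\rho_n^d\to\infty$.

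Next, fix the radial scale at which isolated vertices are abundant. Let $R_n$ be defined by $q(R_n)=1/(n\rho_n^d)$; this is well defined for $n$ large (as $q$ is eventually continuous, strictly decreasing, and vanishes at infinity, while $1/(n\rho_n^d)\to 0$), and $R_n\to\infty$. Equivalently $\psi(R_n)=\log\!\big(Cn\rho_n^d\big)\sim\log n$, so by regular variation $R_n\sim\psi^{\leftarrow}(\log n)$ and, $\psi'$ being regularly varying, $\psi'(R_n)\sim\psi'(\psi^{\leftarrow}(\log n))$; in particular $\rho_n\psi'(R_n)\le 2K$ for $n$ large. This last inequality is the crux: it forces $\psi$ to vary by at most a bounded amount on any interval of length $O(\rho_n)$ around $R_n$, so for every $z$ with $|z|=R_n$ the density $q$ is comparable to $q(R_n)$ throughout $B(z,2\rho_n)$, giving $n\,\nu\big(B(z,t\rho_n)\big)\asymp nq(R_n)\rho_n^d=1$ for $t\in\{1,2\}$.

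Now place $m_n\asymp(R_n/\rho_n)^{d-1}$ points $z^{(1)},\dots,z^{(m_n)}$ on the sphere $\{|x|=R_n\}$, pairwise separated by more than $4\rho_n$, so that the balls $B(z^{(i)},2\rho_n)$ are pairwise disjoint; since $d\ge 2$, $R_n\to\infty$ and $\rho_n\le 1$, we have $m_n\to\infty$. Let $E_i$ be the event that $\mathcal P_n$ has exactly one point in $B(z^{(i)},\rho_n)$ and no point in $B(z^{(i)},2\rho_n)\setminus B(z^{(i)},\rho_n)$. On $E_i$ that single point is isolated in $G(\mathcal P_n,\rho_n)$: any neighbour of it would lie within distance $\rho_n$, hence inside $B(z^{(i)},2\rho_n)$, hence would be a second point there. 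The events $E_i$ depend on $\mathcal P_n$ restricted to the disjoint sets $B(z^{(i)},2\rho_n)$, so they are independent, and since the Poisson parameters $n\,\nu(B(z^{(i)},\rho_n))$ and $n\,\nu(B(z^{(i)},2\rho_n))$ are both $\asymp 1$, there is a fixed $p>0$ with $\PP(E_i)\ge p$ for all $i$ and $n$ large. Hence
\[
\PP\big(G(\mathcal P_n,\rho_n)\text{ has no isolated vertex}\big)\ \le\ \prod_{i=1}^{m_n}\big(1-\PP(E_i)\big)\ \le\ (1-p)^{m_n}\ \longrightarrow\ 0 ,
\]
which completes the argument.

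The routine parts are the sphere-packing count, the independence of a Poisson process over disjoint regions, and the final product bound. The real work lies in the second paragraph: justifying $R_n\sim\psi^{\leftarrow}(\log n)$ and $\psi'(R_n)\sim\psi'(\psi^{\leftarrow}(\log n))$ through the uniform convergence theorem for regularly varying functions, and — the heart of the proof — converting the hypothesis $r_n\psi'(\psi^{\leftarrow}(\log n))=O(1)$ into the comparability $n\,\nu(B(z,t\rho_n))\asymp 1$. Choosing the two-scale event $E_i$ correctly, so that ``one point in the inner ball and none in the surrounding shell'' genuinely forces isolation, is the other point that needs care.
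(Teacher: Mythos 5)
Your argument is correct, but it takes a genuinely different route from the paper. The paper proves Theorem~\ref{thm:exponential} through the machinery of Section~\ref{disconnected strategy}: it shows $\lim_n E_{R_n^{(0)}}=\infty$ for $R_n^{(0)}=\psi^\leftarrow(\log n)$ via an explicit lower bound on the first moment of the isolated-vertex count (Proposition~\ref{exp R_n^(0)}), combines this with the escape estimate $\lim_n\mathbb P(\mathcal P_n\cap B(0,R_n^{(1)})^c=\emptyset)=0$ (Proposition~\ref{R_n^(1) light tail asymptotic}), and then invokes Lemma~\ref{large component upper bound} (a generalization of Lemma~6 of the Gaussian-network paper) to conclude $R^c<R_{\max}$ w.h.p. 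You instead prove directly that an isolated vertex exists w.h.p.: after the monotonicity reduction $r_n\le\rho_n$ (which is genuinely needed, since for very small $r_n$ your single-ball success probability would not be bounded below), you place $m_n\to\infty$ disjoint balls of radius $2\rho_n$ on the sphere $|x|=R_n$ with $q(R_n)=1/(n\rho_n^d)$, use the hypothesis $\rho_n\psi'(R_n)=O(1)$ plus slow variation of $\psi'$ to get $n\nu(B(z,t\rho_n))\asymp 1$, and exploit Poisson independence over disjoint regions to beat $(1-p)^{m_n}$. This buys a self-contained, second-moment-free proof that avoids both the black-box lemma from \cite{gaussianNetworkConnectivity} and the $R_n^{(1)}$ estimate; what it gives up is the structural information the paper's route provides (that the component of the origin does not reach the outermost points, a scheme reused verbatim in the subexponential and superexponential cases). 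Two small points to make explicit when writing this up: an isolated vertex implies disconnectedness only when the graph has at least two vertices, which holds w.h.p. (e.g.\ because at least two of your independent events $E_i$ occur, or simply because $N\sim \Po(n)$); and the identification $R_n\sim\psi^\leftarrow(\log n)$, $\psi'(R_n)\sim\psi'(\psi^\leftarrow(\log n))$ should cite the uniform convergence theorem together with $\log(Cn\rho_n^d)=\log n+O(\log\log n)$, which your bound $\psi'(\psi^\leftarrow(\log n))=(\log n)^{o(1)}$ indeed supplies.
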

    
    If $q$ has exponential decay, then $\psi'\psi^{\leftarrow}(\log n)\sim \psi'(\log n)$ and $\psi'$ is slowly varying. Thus the aymptotic condition imposed on $r_n$ in that case can be rewritten as
$$
r_n\leq C/L(n),
$$
where $C>0$ is a constant and $L$ is slowly varying. This constraint on $r_n$ is essentially trivial in practice, almost equivalent to asking that $r_n$ remains asymptotically constant. Nonetheless, and although this is of limited practical interest, we do not know what happens to the graph if $q$ has exponential decay and
$$
r_n\psi'\psi^{\leftarrow}(\log n)=\omega(1).
$$


In contrast with the two previous cases, if $q$ has superexponential decay, we find a non-trivial asymptotic condition on the radius $r_n$ for the graph to be disconnected. Theorem~\ref{thm:superexponential} below indicates that there exists an asymptotic threshold such that, for values of $r_n$ decaying faster than this threshold, the graph $G(\mathcal P_n,r_n)$ is disconnected w.h.p., while for values of $r_n$ decaying slower than this same threshold, concentration inequalities hold on small cubes partitioning $\R^d$ (which imply in particular the connectivity of the graph). This threshold agrees with the results found in \cite{randGraphs} and \cite{gaussianNetworkConnectivity}, which studied the special case where the density is a Gaussian.

Prior to stating the theorem, we shall need to define a partition of $\R^d$ into small cubes, similarly to \cite{optimalCheeger} in the case of a bounded domain. Let us divide $\R^d$ into a grid of cubes $\{Q'_{i,n}|i\in \N\}$ of side width $\gamma_nr_n$, where $\gamma_n=o(1)$. 
Denote the centre of $Q'_{i,n}$ by $z_{i,n}$ and assume, without loss of generality, that the origin is one of the centres.\\

Given $R_n>0$ define
\begin{equation}\label{index domain partition}
S_n(R_n):=\{i\in \N\text{ }|\text{ }Q'_{i,n}\subset B(0,R_n)\}.
\end{equation}

For $i\in S_n(R_n)$, define $$I(i,n):=\{j\in \N\text{ }|\text{ }\forall i'\in S_n(R_n)\setminus\{i\}\text{, }|z_{i,n}-z_{j,n}|<|z_{i',n}-z_{j,n}|\},$$
and let
\begin{equation}\label{boxes domain partition}
Q_{i,n}:=\cup_{j\in I(i,n)}(Q'_{j,n}\cap B(0,R_n)).
\end{equation}
This forms a partition $\{Q_{i,n}\}_{i\in S_n(R_n)}$ of $B(0,R_n)$ into small cubes.

    \begin{thm}[Superexponential decay]\label{thm:superexponential}
    Suppose that $q$ has superexponential decay.
    \begin{itemize}
    \item  If
    $$
    r_n\psi'(\psi^{\leftarrow}(\log n)) =o(\log \log n),
    $$
    then w.h.p. $G(\mathcal P_n,r_n)$ is disconnected.\\
   \item If 
    $$
    r_n\psi'(\psi^{\leftarrow}(\log n)) =\omega(\log \log n),
    $$
    then there exist increasing sequences of real numbers $(R_n^{(0)})_n$ and $(R_n^{(1)})_n$ such that $R_n^{(1)}-R_n^{(0)}=o(r_n)$,
     and w.h.p.
     
$$     
     \mathcal P_n\cap B(0,R_n^{(1)})^c=\emptyset,
$$   
and for all choices of $\gamma_n=o(1)$ such that $\gamma_n\psi'(\psi^\leftarrow(\log n))=\omega(1)$, for all $i\in S_n(R_n^{(0)})$
\begin{align*}
(1-\gamma_n)n\nu(Q_{i,n})\leq \mathcal P_n(Q_{i,n})\leq (1+\gamma_n)n\nu(Q_{i,n}),
\end{align*}
where $\mathcal P_n(A):=|\mathcal P_n\cap A|$.
\end{itemize}
\end{thm}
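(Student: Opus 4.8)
I would treat the two halves of the statement by two separate arguments, both organized around the critical radius $\rho_n^\ast:=\psi^\leftarrow(\log n)$; write $\Psi_n:=\psi'(\rho_n^\ast)$, the quantity occurring in both hypotheses. The standing tools are: (i) the von Mises tail asymptotic $\nu(B(0,R)^c)\sim c_d\,R^{d-1}\psi'(R)^{-1}e^{-\psi(R)}$ as $R\to\infty$ (a Watson-type consequence of $a'\to0$), and the crude one-ball bound $\nu(B(x,r_n))\le c_d\,r_n^d\,e^{-\psi(|x|-r_n)}\le c_d\,r_n^d\,e^{-\psi(|x|)+r_n\psi'(|x|)}$, valid because $\psi'$ is eventually increasing; (ii) $\psi^\leftarrow\in RV_{1/v}$ and $\psi'\in RV_{v-1}$, so $\psi^\leftarrow(\log n\pm t_n)\sim\rho_n^\ast$ and $\psi'(\psi^\leftarrow(\log n\pm t_n))\sim\Psi_n$ whenever $t_n=o(\log n)$, and in particular $\Psi_n=(\log n)^{(v-1)/v+o(1)}$ grows faster than every power of $\log\log n$.

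Disconnection ($r_n\Psi_n=o(\log\log n)$). The plan is to exhibit an isolated vertex with high probability. Since $\Psi_n$ beats every power of $\log\log n$, the hypothesis forces $r_n$ — hence $r_n^d$ — to be at most a genuine negative power of $\log n$. After disposing of the easy case $nr_n^d\to0$ (there $\sup_x n\nu(B(x,r_n))\to0$, and one takes the region $E$ below to be $\R^d$), I would pick $\rho_n\sim\rho_n^\ast$ with
\[
n\nu(B(0,\rho_n)^c)\to\infty\qquad\text{and}\qquad \sup_{|x|\ge\rho_n}n\nu(B(x,r_n))\to0 .
\]
By the estimates in (i) this amounts to sandwiching $\psi(\rho_n)$ between $\log n+d\log r_n+r_n\Psi_n+\omega(1)$ and $\log n+(d-1)\log\rho_n-\log\psi'(\rho_n)-\omega(1)$, and the difference of these bounds is comparable to $(d-1)\log\log n$ and so diverges to $+\infty$ — this is where $r_n\Psi_n=o(\log\log n)$ and $d\log r_n\ge-\tfrac{d(v-1)}{v}\log\log n$ get used (still-smaller $r_n$ is handled by cruder estimates). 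Set $E:=B(0,\rho_n)^c$ and $M:=n\nu(E)=\E[\mathcal P_n(E)]\to\infty$. If $G(\mathcal P_n,r_n)$ had no isolated vertex, every point of $\mathcal P_n\cap E$ would have a $\mathcal P_n$-neighbour within $r_n$, so the number $W$ of ordered pairs $(x,y)$ with $x\in\mathcal P_n\cap E$, $y\in\mathcal P_n\setminus\{x\}$, $|x-y|<r_n$ would satisfy $W\ge\mathcal P_n(E)$. But by the Mecke equation $\E[W]=\int_E nq(x)\,n\nu(B(x,r_n))\,dx\le M\sup_E n\nu(B(\cdot,r_n))=o(M)$, and $\mathcal P_n(E)\sim\Po(M)$ concentrates near $M$, so
\[
\PP(\text{no isolated vertex})\le e^{-M}+\PP\big(\mathcal P_n(E)<M/2\big)+\tfrac{2\E[W]}{M}\to0 .
\]
As $|\mathcal P_n|\to\infty$ w.h.p., an isolated vertex forces disconnection.

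Concentration ($r_n\Psi_n=\omega(\log\log n)$). The hypothesis lets me choose $b_n$ with $b_n\to\infty$, $\log\log n=o(b_n)$ and $b_n=o(r_n\Psi_n)$ (e.g. $b_n:=\sqrt{(\log\log n)\,r_n\Psi_n}$); after a harmless monotone adjustment put $R_n^{(0)}:=\psi^\leftarrow(\log n-b_n)$ and $R_n^{(1)}:=\psi^\leftarrow(\log n+b_n)$. Then $R_n^{(1)}-R_n^{(0)}=2b_n(\psi^\leftarrow)'(\xi_n)\sim 2b_n/\Psi_n=o(r_n)$ by the mean value theorem and $(\psi^\leftarrow)'=1/(\psi'\circ\psi^\leftarrow)$ — exactly the place where $\log\log n=o(r_n\Psi_n)$ enters. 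Next, $\PP(\mathcal P_n\cap B(0,R_n^{(1)})^c\ne\emptyset)\le n\nu(B(0,R_n^{(1)})^c)$, and by (i) the right side is a power of $\log n$ times $n e^{-\psi(R_n^{(1)})}\approx e^{-b_n}$, hence $\to0$ since $b_n/\log\log n\to\infty$. For the boxes, fix any admissible $\gamma_n$ (so $\gamma_n=o(1)$ and $\gamma_n\Psi_n=\omega(1)$). For $i\in S_n(R_n^{(0)})$ one has $Q'_{i,n}\subseteq Q_{i,n}\subseteq B(0,R_n^{(0)})$, so with $q$ radial and eventually non-increasing
\[
\lambda_i:=n\nu(Q_{i,n})\ \ge\ c\,n\,e^{-\psi(R_n^{(0)})}(\gamma_nr_n)^d\ =\ c\,e^{b_n}(\gamma_nr_n)^d .
\]
Since $\mathcal P_n(Q_{i,n})\sim\Po(\lambda_i)$, the Poisson--Chernoff bound gives $\PP(|\mathcal P_n(Q_{i,n})-\lambda_i|>\gamma_n\lambda_i)\le 2e^{-\gamma_n^2\lambda_i/3}$, and a union bound over the at most $c_d(R_n^{(0)})^d/(\gamma_nr_n)^d$ boxes reduces the claim to
\[
d\log R_n^{(0)}+d\log\tfrac{1}{\gamma_nr_n}-\tfrac{c}{3}\,\gamma_n^{\,d+2}r_n^{\,d}\,e^{b_n}\ \longrightarrow\ -\infty ,
\]
which holds uniformly in admissible $\gamma_n$: $R_n^{(0)}\sim\rho_n^\ast$ makes the first term $O(\log\log n)$; the admissibility bounds $\gamma_n=\omega(1/\Psi_n)$ and $r_n=\omega(\log\log n/\Psi_n)$ give $\log\tfrac1{\gamma_nr_n}=O(\log\log n)$ and $\gamma_n^{\,d+2}r_n^{\,d}\ge(\log n)^{-2(d+1)(v-1)/v+o(1)}$; and $e^{b_n}$ beats every power of $\log n$, so the negative term is $\omega(\log\log n)$. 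This yields the two-sided bound for all $i\in S_n(R_n^{(0)})$ simultaneously with probability $\to1$.

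The two genuinely delicate points are the feasibility checks: in the first argument, that the competing constraints on $\psi(\rho_n)$ are compatible for the whole admissible range of $r_n$ (this is what pins the exponent $\log\log n$ in the theorem); and in the second, that the box union bound survives uniformly over all admissible $\gamma_n$. Both come down to the elementary fact that $\Psi_n$ — and $e^{b_n}$ respectively — dominate any fixed power of $\log\log n$ (respectively of $\log n$); the remaining ingredients (von Mises tail asymptotics, Mecke's equation, Poisson tail bounds, regular variation of $\psi^\leftarrow$) are standard.
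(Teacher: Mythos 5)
Your proposal is sound, and on the disconnection half it follows a genuinely different route from the paper. The paper works with the expected number $E_{R}$ of isolated vertices inside a ball: it shows $E_{R_n^{(0)}}\to\infty$ for $R_n^{(0)}=\psi^\leftarrow(\log n)$ (Proposition~\ref{expected number of isolated vertices}), invokes Lemma~\ref{large component upper bound} (imported as a generalization of a lemma from the Gaussian case) to conclude that the component of the origin stays inside $B(0,R_n^{(0)})$, and separately shows via Proposition~\ref{R_n^(1) light tail asymptotic} that w.h.p.\ some point lies beyond $R_n^{(1)}\geq R_n^{(0)}$, so $R^c<R_{\max}$. You instead exhibit an isolated vertex directly: choose an exterior region $E=B(0,\rho_n)^c$ with $n\nu(E)\to\infty$ but $\sup_{x\in E}n\nu(B(x,r_n))\to 0$, and combine Mecke's equation, Markov's inequality and Poisson concentration of $\mathcal P_n(E)$. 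This buys self-containedness (no appeal to the unproved generalization of the Gaussian-case lemma) at the cost of redoing the von Mises tail asymptotic that the paper proves in Proposition~\ref{R_n^(1) light tail asymptotic}; the feasibility sandwich you describe is exactly where $r_n\psi'(\psi^\leftarrow(\log n))=o(\log\log n)$ enters in the paper's computation as well. On the concentration half your strategy coincides with the paper's (Chernoff bound $H(1\pm\gamma)\gtrsim\gamma^2$, lower bound $\nu(Q_{i,n})\geq q(R_n^{(0)})(\gamma_nr_n)^d$, union bound over $O((R_n^{(0)}/\gamma_nr_n)^d)$ cubes), but your radii $R_n^{(0)}=\psi^\leftarrow(\log n-b_n)$, $R_n^{(1)}=\psi^\leftarrow(\log n+b_n)$ with $\log\log n=o(b_n)=o(r_n\psi'(\psi^\leftarrow(\log n)))$ are simpler than the paper's $A_n,B_n$ and, notably, do not depend on $\gamma_n$, which matches the quantifier order of the theorem statement better than the paper's own proof (where $A_n$ involves $\gamma_n$).

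Two small caveats, neither a real gap. First, your side remark that $d\log r_n\geq-\tfrac{d(v-1)}{v}\log\log n$ is ``used'' (with still smaller $r_n$ relegated to cruder estimates) is unnecessary: writing $t_n:=r_n\psi'(\rho_n)$, the gap between your two constraints on $\psi(\rho_n)$ is $(d-1)\log\big(\rho_n\psi'(\rho_n)\big)-d\log t_n-t_n\gtrsim(d-1)\log\log n$ for the whole range $r_n\leq 1$, since decreasing $r_n$ only relaxes the lower constraint; so the single construction covers the middle regime and the case split is optional. Second, the claim that the box union bound holds ``uniformly in admissible $\gamma_n$'' is not meaningful without a rate in the conditions $\gamma_n=o(1)$, $\gamma_n\psi'(\psi^\leftarrow(\log n))=\omega(1)$; but uniformity is not needed --- the statement is per admissible sequence, and your estimate gives exactly that.
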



Note in particular that the above concentration inequalities imply connectivity of the graph $ G(\mathcal P_n,r_n)$, which could already be deduced by Theorem~$4.5$ of \cite{topoCrackle}. Note also that if the sampling density is a Gaussian, i.e., up to multiplicative constants, say $\psi(z)\sim z^2$, $\psi'(z)\sim z$ and $\psi^\leftarrow(z)\sim z^{1/2}$, then the connectivity threshold obtained from Theorem~\ref{thm:superexponential}, i.e., $\tau\sim\frac{\log\log n}{\sqrt{\log n}}$, agrees with the results of \cite{randGraphs} and \cite{gaussianNetworkConnectivity}.
The authors of \cite{topoCrackle} establish the contractibility of the union of the random balls $\cup_{x\in \mathcal P_n}B(x,r_n)$ under the same asymptotic condition as in Theorem~\ref{thm:superexponential} above:
\begin{equation}\label{connectivity condition}
r_n \psi'\psi^\leftarrow(\log n)=\omega (\log n\log n).
\end{equation}
As explained already in Sections~\ref{sec: intro} and \ref{sec: properties of geometric point processes}, our motivation for presenting a stronger result about concentration properties for the sampled points is that this kind of concentration result is required in order to approximate continuous operators by discrete ones, hence is an important component towards showing the consistency of spectral clustering on $\R^d$. 

We note that the authors in \cite{topoCrackle} asked whether condition~(\ref{connectivity condition}) was sharp in order for the contractibility of $\cup_{x\in \mathcal P_n}B(x,r_n)$ to hold true, or whether it could be weakened to
$$
r_n\psi'\psi^\leftarrow(\log n)=\omega(1).
$$
If this were the case, then in particular the graph $G(\mathcal P_n,r_n)$ would be connected for such values of $r_n$. However Theorem~\ref{thm:superexponential} shows that the graph is disconnected w.h.p. if
$$
r_n\psi'\psi^\leftarrow(\log n)=o(\log \log n),
$$
thus answering to the question asked in \cite{topoCrackle}, showing that condition (\ref{connectivity condition}) is sharp indeed.

We note also that Theorem~\ref{thm:superexponential} does not study the regime $r_n\psi'\psi^\leftarrow(\log n)=\Theta(\log\log n)$. In particular, it would be of interest from the point of view of combinatorics to estimate more precisely the connectivity threshold for random geometric graphs in terms of $\psi$, generalizing the thresholds obtained in the case of a Gaussian sampling density in \cite{randGraphs,gaussianNetworkConnectivity}.
\section{Disconnected Regimes on $\R^d$}\label{disconnected strategy}

The general strategy we follow in order to show that a random geometric graph is disconnected w.h.p., builds on some ideas found in \cite{gaussianNetworkConnectivity}, where the authors investigate connectivity properties of a random geometric graph in the specific case where the vertices are sampled from a Gaussian distribution supported on $\R^2$.

\begin{defn}
We shall use the following notations.
\begin{itemize}
\item Let $R^c$ denote the furthest distance from the origin of a point in $\mathcal P_n$ which is part of the connected component of $ G(\mathcal P_n,r_n)$ containing the origin, and let $R_{\max}$ denote the furthest distance from the origin of a point in $\mathcal P_n$.

\item Given a random geometric graph $G(\mathcal P_n,r_n)$ and an increasing sequence $(R_n)_{n\in \N}$ of real numbers, let $E_{R_n}$ denote the expected number of isolated vertices within distance $R_n$ of the origin.
\end{itemize}
\end{defn}

In order to show that $G(\mathcal P_n,r_n)$ is disconnected w.h.p., it is thus sufficient to show that $R^c<R_{\max}$ w.h.p.. Our strategy will consist in finding two sequences of real numbers $(R_{n}^{(0)})_{n\in \N}$ and $(R_{n}^{(1)})_{n\in \N}$ satisfying\\
\begin{equation}\label{disconnected conditions}
\begin{cases}
R_n^{(0)}\leq R_n^{(1)},\\
\lim_{n\to \infty} E_{R_n^{(0)}}=\infty,\\
\lim_{n\to \infty}\mathbb P\big(\mathcal P_n\cap B(0,R_n^{(1)})^c = \emptyset\big)=0.
\end{cases}
\end{equation}
\begin{lemma}\label{lemma: disconnectivity}
Suppose that there exist sequences $(R_n^{(0)})_{n\in \N}$ and $(R_n^{(1)})_{n\in \N}$ satisfying (\ref{disconnected conditions}), then the graph $G(\mathcal P_n,r_n)$ is disconnected w.h.p..\
\end{lemma}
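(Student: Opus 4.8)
The plan is to deduce disconnectivity from the three conditions in~(\ref{disconnected conditions}) by showing that, with high probability, the connected component of $G(\mathcal P_n,r_n)$ containing the origin does not reach the furthest sampled point; that is, $R^c < R_{\max}$, which forces the graph to be disconnected. The argument naturally splits into two independent high-probability events that I would then intersect.

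First I would use the second condition, $E_{R_n^{(0)}} \to \infty$, to produce an isolated vertex within distance $R_n^{(0)}$ of the origin, w.h.p. The subtlety here is that a large \emph{expected} number of isolated vertices does not by itself guarantee the \emph{existence} of one; one needs a concentration (second-moment / Chebyshev) argument, or an appeal to the Poisson structure. The cleanest route exploits the fact that $\mathcal P_n$ is a Poisson point process: partition $B(0,R_n^{(0)})$ into many small cells, observe that the events ``cell $k$ contains exactly one point of $\mathcal P_n$ and no other point of $\mathcal P_n$ lies within distance $r_n$ of it'' are governed by the restrictions of the Poisson process to disjoint enlarged regions, and use independence/negative association together with $E_{R_n^{(0)}} \to \infty$ to conclude that the number $Z_n$ of isolated vertices in $B(0,R_n^{(0)})$ satisfies $\mathbb P(Z_n = 0) \to 0$. (Alternatively, a direct second-moment computation: show $\mathrm{Var}(Z_n) = o(E_{R_n^{(0)}}^2)$, so that $\mathbb P(Z_n=0) \le \mathrm{Var}(Z_n)/E_{R_n^{(0)}}^2 \to 0$.) I expect this to be the main obstacle, since it requires controlling the correlations between the isolation events of nearby candidate vertices; the Poisson independence across disjoint regions is what makes it tractable.

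Second, the third condition, $\mathbb P\big(\mathcal P_n \cap B(0,R_n^{(1)})^c = \emptyset\big) \to 0$, says precisely that w.h.p.\ there is at least one sampled point outside $B(0,R_n^{(1)})$, i.e.\ $R_{\max} > R_n^{(1)} \ge R_n^{(0)}$ using the first condition $R_n^{(0)} \le R_n^{(1)}$. On the intersection of the two high-probability events, there is an isolated vertex $v$ with $|v| \le R_n^{(0)}$ and a vertex $w$ with $|w| > R_n^{(0)}$; since $v$ is isolated it is not joined to anything, in particular $v$ and $w$ lie in different connected components, so $G(\mathcal P_n,r_n)$ is disconnected. (If one prefers to phrase it through $R^c$: an isolated vertex at distance $\le R_n^{(0)}$ from the origin cannot lie in the origin's component unless it \emph{is} the origin's vertex; a small separate remark handles the degenerate case where the isolated vertex happens to be the component of the origin itself, which still leaves the point at distance $>R_n^{(0)}$ outside that component.)

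Finally, I would combine the two estimates: each event holds with probability tending to $1$, hence so does their intersection, and on that intersection the graph is disconnected. Therefore $\mathbb P\big(G(\mathcal P_n,r_n)\text{ disconnected}\big) \to 1$, which is the claim. The only genuine work is the first-moment-to-existence upgrade in the isolated-vertex step; everything else is bookkeeping with the conditions~(\ref{disconnected conditions}) and the elementary observation that an isolated vertex and a strictly-farther vertex cannot share a component.
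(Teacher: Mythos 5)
Your overall strategy coincides with the paper's: both reduce the claim to exhibiting, on an intersection of two high-probability events, a vertex inside $B(0,R_n^{(0)})$ that cannot share a component with a vertex outside $B(0,R_n^{(1)})$, the latter supplied directly by the third condition in (\ref{disconnected conditions}). The difference is in how the second condition is exploited. The paper invokes an auxiliary lemma (stated as a direct generalization of Lemma 6 of \cite{gaussianNetworkConnectivity}) asserting that $E_{R_n}\to\infty$ implies $R^c\leq R_n$ w.h.p., i.e.\ it controls the origin's component globally; you instead aim for the weaker but sufficient statement that w.h.p.\ at least one isolated vertex exists in $B(0,R_n^{(0)})$, and your final gluing (an isolated vertex plus any distinct vertex forces at least two components) is correct and arguably cleaner than arguing through $R^c$.

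That said, the step you flag as ``the main obstacle'' is in fact the entire mathematical content of the lemma, and your proposal only sketches it. Passing from $E_{R_n^{(0)}}\to\infty$ to $\mathbb P(Z_n=0)\to 0$ requires $\mathrm{Var}(Z_n)=o\big(E_{R_n^{(0)}}^2\big)$, and this is not a formal consequence of the first condition alone: writing the second factorial moment via the Mecke formula, the off-diagonal contribution factorizes only for pairs $y,z$ with $|y-z|\geq 2r_n$, and the near-diagonal pairs contribute a term of order $E[Z_n]\cdot\sup_y n\nu(B(y,2r_n))$ taken over the region where isolated vertices concentrate. One therefore needs to know that $n\nu(B(y,r_n))$ stays bounded near $|y|\approx R_n^{(0)}$ (true in all the regimes where the lemma is later applied, since $R_n^{(0)}$ is calibrated so that $nq(R_n^{(0)})\approx 1$, but not encoded in (\ref{disconnected conditions}) as stated). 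The cell-partition/Poisson-independence variant you mention faces the same issue: the isolation events depend on enlarged, overlapping neighbourhoods, so genuine independence only holds after discarding boundary interactions, which again requires the same local intensity bound. In the paper this work is delegated wholesale to the cited lemma, so your write-up would either need to carry out the second-moment computation with the appropriate extra hypothesis made explicit, or likewise cite a result of this type.
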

\begin{proof}
We have the following result, whose proof is a direct generalization of Lemma $6$ in \cite{gaussianNetworkConnectivity}.
\begin{lemma}\label{large component upper bound}
If $\lim_{n\to \infty}E_{R_n}=\infty$, then $R^c\leq R_n$ w.h.p..
\end{lemma}

 By Lemma \ref{large component upper bound}, w.h.p. $R^c\leq R_n^{(0)}$ and we also see that w.h.p. $R_n^{(1)}< R_{\max}$, hence w.h.p.  $R^c<R_{\max}$.
\end{proof}

\section{Heavy Tail Densities}\label{sec: heavy tail}
Suppose throughout this subsection that $r_n\equiv 1$ and that the sampling density has a heavy tail (see Definition \ref{heavy tail}). In this case, we show that $G(\mathcal P_n,1)$ is disconnected w.h.p..  Following the strategy presented in Section~\ref{disconnected strategy}, it suffices to find sequences $(R_n^{(0)})_n$ and $(R_n^{(1)})_n$ satisfying (\ref{disconnected conditions}).

\begin{prop}\label{prop: heavy tail R1}
Let $\epsilon>0$ and suppose that $(R_n)_{n\in \N}$ satisfies
$$
R_n=o(n^{1/(\alpha+\epsilon-d)}),
$$
then
$$
\lim_{n\to \infty}\mathbb P\big(\mathcal P_n\cap B(0,R_n)^c=\emptyset\big)=0.
$$

\end{prop}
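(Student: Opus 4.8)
The plan is to show that with high probability there is at least one point of $\mathcal P_n$ outside $B(0,R_n)$, by a first-moment / direct computation on the Poisson tail. Since $\mathcal P_n$ is a Poisson point process of intensity $nq$, the number of points landing in $B(0,R_n)^c$ is Poisson distributed with parameter $\lambda_n := n\nu(B(0,R_n)^c) = n\int_{|x|>R_n} q(x)\,dx$. Therefore
$$
\mathbb P\big(\mathcal P_n\cap B(0,R_n)^c = \emptyset\big) = e^{-\lambda_n},
$$
and it suffices to prove $\lambda_n \to \infty$, i.e. that $n\,\nu(B(0,R_n)^c) \to \infty$.

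The next step is to estimate the tail mass $\nu(B(0,R_n)^c)$ from below. Writing the integral in polar coordinates, $\nu(B(0,R_n)^c) = d\omega_d \int_{R_n}^\infty q(\rho)\rho^{d-1}\,d\rho$ where $\omega_d$ is the volume of the unit ball and $q$ here denotes the radial profile. Since $q$ has a heavy tail, $q \in RV_{-\alpha}$ with $\alpha > d$, so by the Characterization Theorem (Theorem~\ref{characterization thm}) we may write $q(\rho) = L(\rho)\rho^{-\alpha}$ for a slowly varying $L$. Then $q(\rho)\rho^{d-1} = L(\rho)\rho^{d-1-\alpha} \in RV_{d-1-\alpha}$ with exponent $d-1-\alpha < -1$, so this is an integrable regularly varying function and Karamata's theorem applies: $\int_{R_n}^\infty L(\rho)\rho^{d-1-\alpha}\,d\rho \sim \frac{1}{\alpha-d}\,L(R_n)R_n^{d-\alpha}$ as $R_n\to\infty$. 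Hence there is a constant $c>0$ with $\nu(B(0,R_n)^c) \ge c\,L(R_n)R_n^{d-\alpha}$ for all large $n$, and so $\lambda_n \ge c\,n\,L(R_n)R_n^{d-\alpha}$.

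It remains to check that the hypothesis $R_n = o(n^{1/(\alpha+\epsilon-d)})$ forces $n\,L(R_n)R_n^{d-\alpha} \to \infty$. From $R_n^{\alpha+\epsilon-d} = o(n)$ we get $n R_n^{d-\alpha} \ge n R_n^{d-\alpha-\epsilon} \cdot R_n^{\epsilon}$... more directly: $n R_n^{d-\alpha} = (n R_n^{d-\alpha-\epsilon}) R_n^{\epsilon}$, and $n R_n^{d-\alpha-\epsilon} = n / R_n^{\alpha+\epsilon-d} \to \infty$ by assumption, while $R_n^\epsilon \ge 1$ eventually (we may assume $R_n\to\infty$, else the statement is trivial since then $\mathcal P_n$ has points outside $B(0,R_n)$ with probability bounded away from $0$... actually one should handle the bounded case separately or simply assume $R_n\to\infty$). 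Finally, the slowly varying factor $L(R_n)$ cannot kill this polynomial growth: by Potter's bounds, for any $\delta>0$, $L(R_n) \ge R_n^{-\delta}$ eventually, and choosing $\delta < \epsilon$ small enough still leaves $n L(R_n) R_n^{d-\alpha} \ge n R_n^{d-\alpha-\delta} \to \infty$. This gives $\lambda_n\to\infty$ and hence $e^{-\lambda_n}\to 0$, as required.

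The only real subtlety, and the step I would be most careful about, is the interplay between the slowly varying function $L$ and the polynomial rate in the hypothesis: one must invoke Karamata's theorem (or at least Potter-type bounds) to absorb $L(R_n)$ into an arbitrarily small power of $R_n$, which is exactly why the $\epsilon$ appears in the statement $R_n = o(n^{1/(\alpha+\epsilon-d)})$ rather than the cleaner $R_n = o(n^{1/(\alpha-d)})$. Everything else — the exact Poisson void probability and polar-coordinate bookkeeping — is routine.
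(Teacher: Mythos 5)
Your proposal is correct and follows essentially the same route as the paper: the exact Poisson void probability, polar coordinates, and a lower bound on the tail integral coming from the regular variation of $q$, with the $\epsilon$ absorbing the slowly varying factor. The only cosmetic difference is that you invoke Karamata's theorem and then Potter's bounds on $L(R_n)$, whereas the paper integrates the pointwise bound $q(x)>|x|^{-\alpha-\epsilon}$ directly; both hinge on the same fact that $L(x)x^{\epsilon}\to\infty$, and your aside about the case where $R_n$ stays bounded is a harmless extra precaution.
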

\begin{proof}
First note that
$$
    \mathbb P\big(\mathcal P_n\cap B(0,R_n)^c=\emptyset\big)=\exp\left(-n\int_{|x|\geq R_n}q(x)dx\right).
$$
We will show that
$$
\lim_{n\to \infty}n\int_{|x|\geq R_n}q(x)dx = \infty.
$$
Since $q$ is radial, we have
$$
n\int_{|x|\geq R_n}q(x)dx=ns_{d-1}\int_0^\infty(R_n+\rho)^{d-1}q(R_n+\rho)d\rho,
$$
where $s_{d-1}$ denotes the surface area of a unit Euclidean ball in $\R^d$.
Furthermore, $q\in RV_{-\alpha}$ with $\alpha>d$, hence there is a slowly varying function $L$ such that $q(x) = L(|x|)|x|^{-\alpha}$. By virtue of the slow variation of $L$, we deduce that for $|x|$ sufficiently large
$$
q(x)>|x|^{-\alpha-\epsilon},
$$
hence
\begin{align*}
    n\int_{|x|\geq R_n}q(x)dx&>ns_{d-1}\int_0^\infty(R_n+\rho)^{d-1-\alpha-\epsilon}d\rho\\
    &=\frac{ns_{d-1}}{d-1-\alpha-\epsilon}\left[\big(R_n+\rho\big)^{d-\alpha-\epsilon}\right]_0^\infty\\
    &=\frac{s_{d-1}}{\alpha+\epsilon+1-d}\big(n{R_n}^{d-\alpha-\epsilon}\big),
\end{align*}
which is $\omega(1)$, using the assumptions on $R_n$ and that $\alpha> d$.
\end{proof}
\begin{prop}\label{prop: heavy tail R0}
Suppose that
$$
R_n=\omega( n^{1/(\alpha-\epsilon)}),
$$
where $0<\epsilon<d$,
then
$$
\lim_{n\to \infty}E_{R_n} = \infty.
$$
\end{prop}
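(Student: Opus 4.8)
The plan is to show $E_{R_n} = \omega(1)$ by bounding below the expected number of \emph{isolated} vertices of $G(\mathcal{P}_n,1)$ lying in the annulus $B(0,R_n)\setminus B(0,R_n/2)$ (say). For a Poisson point process of intensity $nq$, by the Mecke/Slivnyak formula the expected number of isolated vertices in a region $A$ is
$$
\int_A nq(x)\exp\!\left(-n\!\int_{B(x,1)}q(y)\,dy\right)dx,
$$
since a point at $x$ is isolated precisely when no other point of $\mathcal{P}_n$ falls in the unit ball $B(x,1)$, and conditionally on a point being at $x$ the rest of the process is still Poisson of intensity $nq$. So $E_{R_n}$ is at least this integral restricted to $\{R_n/2 \le |x| \le R_n\}$.

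The key step is to control the exponential factor. Because $q\in RV_{-\alpha}$, for $|x|$ large and any small $\delta>0$ we have $q(y) \le |y|^{-\alpha+\delta}$ for $|y|$ large; integrating over $B(x,1)$ (which, for $|x|$ large, consists of points $y$ with $|y| \ge |x|-1$), this gives
$$
n\!\int_{B(x,1)}q(y)\,dy \;\le\; n\,\omega_d\,(|x|-1)^{-\alpha+\delta} \;\lesssim\; n\,|x|^{-\alpha+\delta},
$$
where $\omega_d$ is the volume of the unit ball. Now use the hypothesis $R_n = \omega(n^{1/(\alpha-\epsilon)})$: for $R_n/2 \le |x| \le R_n$ this forces $|x|^{\alpha-\epsilon} = \omega(n)$, so choosing $\delta$ with $\epsilon < \alpha - \delta$ appropriately (e.g. $\delta = \epsilon/2$) yields $n |x|^{-\alpha+\delta} = o(1)$ uniformly over the annulus, hence $\exp(-n\int_{B(x,1)}q) \ge 1/2$ for $n$ large. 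Therefore
$$
E_{R_n} \;\ge\; \tfrac{1}{2}\, n \!\!\int_{R_n/2 \le |x| \le R_n}\!\! q(x)\,dx,
$$
and the same slow-variation lower bound $q(x) \ge |x|^{-\alpha-\delta}$ on the annulus gives (in radial coordinates, with $s_{d-1}$ the surface measure of the unit sphere)
$$
E_{R_n} \;\gtrsim\; n \int_{R_n/2}^{R_n} \rho^{d-1-\alpha-\delta}\,d\rho \;\asymp\; n\, R_n^{\,d-\alpha-\delta},
$$
using $d - \alpha - \delta < 0$. Finally, since $R_n = \omega(n^{1/(\alpha-\epsilon)})$ and $d - \alpha - \delta < 0$ we can pick $\delta$ small enough (together with the $\epsilon$ in the hypothesis, exploiting $\epsilon < d$ so that $d - \alpha + \epsilon$ is in the right range) so that $n R_n^{d-\alpha-\delta} \to \infty$; concretely $n R_n^{d-\alpha-\delta} \ge n \cdot n^{(d-\alpha-\delta)/(\alpha-\epsilon)}$ for $n$ large, and the exponent $1 + (d-\alpha-\delta)/(\alpha-\epsilon)$ is positive once $\delta$ is chosen below $(\alpha-\epsilon) - (\alpha - d) = d - \epsilon > 0$. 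This completes the argument.

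The main obstacle is bookkeeping of the two competing small parameters: one needs a single choice of $\delta$ that simultaneously makes the upper bound on $q$ inside $B(x,1)$ strong enough to kill the exponential (requiring $\delta$ small relative to $\alpha - \epsilon - (\alpha-d)$, i.e. relative to $d-\epsilon$) and makes the lower bound on $\int q$ over the annulus still give a divergent quantity after multiplying by $n$. The constraint $0 < \epsilon < d$ in the hypothesis is exactly what guarantees such a $\delta$ exists. A minor technical point worth stating carefully is the uniformity of the slow-variation estimates (Potter's bounds) over the annulus $[R_n/2, R_n]$ as $n\to\infty$, which is standard but should be invoked explicitly; one also wants $R_n \to \infty$, which is immediate from $R_n = \omega(n^{1/(\alpha-\epsilon)})$.
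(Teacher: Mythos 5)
Your route is essentially the paper's: lower-bound $E_{R_n}$ by the Mecke/Palm integral $\int_A nq(x)\exp\big(-n\int_{B(x,1)}q\big)\,dx$ over an annulus, use Potter-type bounds from $q\in RV_{-\alpha}$ to control the exponential factor and the density, and balance the resulting exponents. The control of the exponential is fine: for $|x|\ge R_n/2$ and $\delta<\epsilon$, the hypothesis $R_n=\omega(n^{1/(\alpha-\epsilon)})$ does give $n|x|^{-\alpha+\delta}=o(1)$ uniformly on the annulus. The gap is in the last step. Since $d-\alpha-\delta<0$, the eventual bound $R_n\ge n^{1/(\alpha-\epsilon)}$ yields $R_n^{d-\alpha-\delta}\le n^{(d-\alpha-\delta)/(\alpha-\epsilon)}$, which is the \emph{reverse} of your displayed inequality $nR_n^{d-\alpha-\delta}\ge n\cdot n^{(d-\alpha-\delta)/(\alpha-\epsilon)}$: a lower bound on $R_n$ gives an upper bound on a negative power of $R_n$. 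And the issue is not merely notational: if, say, $R_n=n^{100}$ (which satisfies the hypothesis), then $n\int_{R_n/2\le|x|\le R_n}q(x)\,dx\to 0$, so the annulus $[R_n/2,R_n]$ sits too far out in the tail and your final lower bound on $E_{R_n}$ degenerates to $0$ rather than diverging.

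The repair is cheap. $E_{R_n}$ is nondecreasing in $R_n$ (it counts isolated vertices in a growing ball), so replace $R_n$ by $\rho_n:=\min\{R_n,\;h(n)\,n^{1/(\alpha-\epsilon)}\}$ for some $h(n)\to\infty$ slowly---equivalently, place the annulus at radius $\asymp h(n)\,n^{1/(\alpha-\epsilon)}$ rather than at $R_n/2$. There the two competing requirements, $n\rho_n^{-\alpha+\delta}=o(1)$ and $n\rho_n^{d-\alpha-\delta}=\omega(1)$, are simultaneously satisfiable precisely because $\epsilon<d$ leaves room to choose $\delta$; this is the bookkeeping you describe in your closing paragraph, but carried out at the correct radius. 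For what it is worth, the paper's own proof contains the same imprecision: it asserts $R_n=o(n^{1/(\alpha+\epsilon'-d)})$ ``from our choice of $\epsilon'$'', which also does not follow from the lower-bound hypothesis alone and is only valid when $R_n\asymp n^{1/(\alpha-\epsilon)}$, the case actually used in Corollary~\ref{cor: heavy tail disconnectivity}. So your argument faithfully mirrors the paper's, but as written the final inequality is reversed and needs the monotonicity (or annulus-relocation) patch above.
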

\begin{proof}
Given $y$ such that $|y|=\rho$,
\begin{align*}
\mathbb P\big(y \not\in \cup_{x\in \mathcal P_n}B(x,1)\big)&=\mathbb P\big(\mathcal P_n \cap B(y,1) = \emptyset\big)\\
&=\exp\left(-n\int_{B(y,1)}q(x)dx\right)\\
&\geq\exp\left(-n\omega_dq(\rho+\delta)\right),
\end{align*}
where $\delta\in (-1,1)$ is such that $q(\rho+\delta):=\max\{q(z)\ |\ z\in B(y,1)\}$.
Hence
\begin{align*}
    E_{R_n}&=\int_{B(0,R_n)}\mathbb P\big(y\not\in \cup_{x\in \mathcal P_n}B(x,1)\big)d\mathbb P(y)\\
    &\geq ns_{d-1}\int_{R_n/2}^{R_n}\exp\left(-n\omega_d q(\rho+\delta)\right)\rho^{d-1}q(\rho)d\rho.
\end{align*}
Since $q\in RV_{-\alpha}$, we have for $\rho$ sufficiently large
$$
q(\rho+\delta)\leq (\rho+\delta)^{-\alpha+\epsilon}
$$
and
$$
\rho^{d-1}q(\rho)\geq \rho^{d-1-\alpha-\epsilon'},
$$
where $\epsilon'>0$ is such that $\epsilon+\epsilon'<d$. In particular, note from our choice of $\epsilon'$ that 
\begin{equation}\label{eq: upper bound condition on R_n}
    R_n=o(n^{1/(\alpha+\epsilon'-d)}).
\end{equation}
Thus
$$
E_{R_n}\gtrsim \exp\left(-n\omega_d(R_n/2+\delta)^{-\alpha+\epsilon}\right)n\int_{R_n/2}^{R_n}\rho^{d-1-\alpha-\epsilon'}d\rho.
$$
We have
$$
n(R_n/2+\delta)^{-(\alpha-\epsilon)}=o(1),
$$
hence
$$
\exp\left(-n\omega_d(R_n/2+\delta)^{-\alpha+\epsilon}\right)\sim 1,
$$
hence, using (\ref{eq: upper bound condition on R_n})
\begin{align*}
    E_{R_n}&\gtrsim \frac{n}{d-1-\alpha-\epsilon'}\left[\rho^{d-\alpha-\epsilon'}\right]_{R_n/2}^{R_n}\\
    &\gtrsim nR_n^{d-\alpha-\epsilon'}\\
    &=\omega(1).
\end{align*}

\end{proof}

\begin{corollary}\label{cor: heavy tail disconnectivity}
If $q$ has a heavy tail then $ G(\mathcal P_n,1)$ is disconnected w.h.p..
\end{corollary}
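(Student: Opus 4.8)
The plan is to derive the corollary straight from Lemma~\ref{lemma: disconnectivity}, so that it suffices to exhibit sequences $(R_n^{(0)})_n$ and $(R_n^{(1)})_n$ meeting the three requirements in (\ref{disconnected conditions}). Propositions~\ref{prop: heavy tail R0} and \ref{prop: heavy tail R1} are tailored to supply the last two of these: the former gives $\lim_n E_{R_n^{(0)}}=\infty$ once $R_n^{(0)}=\omega(n^{1/(\alpha-\epsilon)})$, and the latter gives $\lim_n\mathbb P(\mathcal P_n\cap B(0,R_n^{(1)})^c=\emptyset)=0$ once $R_n^{(1)}=o(n^{1/(\alpha+\epsilon-d)})$. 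So the whole task reduces to checking that these two regimes can be realised at the same time by an admissible pair with $R_n^{(0)}\le R_n^{(1)}$.

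To do this I would fix some $\epsilon\in(0,d/2)$, which is possible since $d\geq 2$. As $\alpha>d$, the quantities $\alpha-\epsilon$ and $\alpha+\epsilon-d$ are both positive, and $(\alpha-\epsilon)-(\alpha+\epsilon-d)=d-2\epsilon>0$, whence $\tfrac{1}{\alpha-\epsilon}<\tfrac{1}{\alpha+\epsilon-d}$. Choosing any $\beta$ with $\tfrac{1}{\alpha-\epsilon}<\beta<\tfrac{1}{\alpha+\epsilon-d}$ and setting $R_n^{(0)}:=R_n^{(1)}:=n^{\beta}$, we get $R_n^{(0)}=\omega(n^{1/(\alpha-\epsilon)})$ and $R_n^{(1)}=o(n^{1/(\alpha+\epsilon-d)})$, while trivially $R_n^{(0)}\le R_n^{(1)}$. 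Proposition~\ref{prop: heavy tail R0} (with this $\epsilon$) and Proposition~\ref{prop: heavy tail R1} (with the same $\epsilon$) then certify all three conditions in (\ref{disconnected conditions}), and Lemma~\ref{lemma: disconnectivity} concludes that $G(\mathcal P_n,1)$ is disconnected w.h.p.

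The only genuinely load-bearing point is the strict separation of exponents $\tfrac{1}{\alpha-\epsilon}<\tfrac{1}{\alpha+\epsilon-d}$: heuristically, the component of the origin is confined to a ball of radius of order $n^{1/\alpha}$ (past which isolated vertices proliferate), while the farthest sampled point lies at radius of order $n^{1/(\alpha-d)}$, and the hypothesis $\alpha>d$ of the heavy-tail definition is exactly what makes the latter polynomially larger than the former, leaving room for a single power $n^{\beta}$ between them. Taking $\epsilon<d/2$ merely records this gap quantitatively; everything else has already been absorbed into the two propositions, so I do not anticipate any further obstacle.
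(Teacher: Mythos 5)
Your proposal is correct and follows essentially the same route as the paper: both reduce the corollary to Lemma~\ref{lemma: disconnectivity} via Propositions~\ref{prop: heavy tail R0} and~\ref{prop: heavy tail R1}, taking $R_n^{(0)}=R_n^{(1)}$ equal to a power of $n$ whose exponent sits strictly between $\tfrac{1}{\alpha-\epsilon}$ and $\tfrac{1}{\alpha+\epsilon-d}$, which the hypothesis $\alpha>d$ makes possible. The only cosmetic difference is that the paper fixes $\epsilon=d/2$ and a second parameter $\epsilon'=d/4$ with $R_n\sim n^{1/(\alpha-d/2)}$, whereas you use a single $\epsilon<d/2$ and an intermediate exponent $\beta$, which if anything matches the $\omega/o$ hypotheses of the two propositions more cleanly.
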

\begin{proof}
Let $\epsilon:=d/2$, and let $R_n^{(0)}=R_n^{(1)}\sim n^{1/(\alpha-\epsilon)}$. By Proposition~\ref{prop: heavy tail R0}, we know that $\lim_{n\to\infty} E_{R_n^{(0)}}=\infty$.

Letting $\epsilon':=d/4$, such that $\epsilon+\epsilon'<d$, we see that $R_n^{(1)}=o(n^{1/(\alpha+\epsilon'-d)})$. By Proposition~\ref{prop: heavy tail R1} then, 
$$\lim_{n\to \infty}\mathbb P\big(\mathcal P_n\cap B(0,R_n)^c=\emptyset\big)=0.$$ We can thus conclude, by Lemma~\ref{lemma: disconnectivity}, that the graph $ G(\mathcal P_n,1)$ is disconnected w.h.p..
\end{proof}

\section{Light Tail Densities}
Suppose that $q$ is of the form
\begin{equation}\label{light tail form}
q(x) = L(|x|)\exp\left(-\psi(|x|)\right),
\end{equation}
where asymptotically $L\equiv C$, $\psi$ is of von Mises type, $\psi'$ is eventually monotone and $\psi\in RV_v$ with $v>0$.\\
Define $(R_n^{(0)})_n$ and $(R_n^{(1)})_n$ by
$$
\psi(R_n^{(0)}):=\log n
$$
and
$$
\psi\big(R_n^{(1)}\big):=\log n+(d-1)\log \psi^{\leftarrow}(\log n)-\log \big(\psi'( \psi^{\leftarrow}(\log n))\big)-w(n),
$$
where $w(n)\to \infty$ as $n\to \infty$, and such that $w(n)=o(\log\log n)$, and where we recall that $\psi^\leftarrow$ denotes the asymptotic inverse function of $\psi$ (which is well-defined since $\psi$ is of von Mises type). 

Going through the various decays for a light tail density, we find for each case asymptotic conditions on the radius parameter $r_n$ such that the sequences $(R_n^{(0)})_n$ and $(R_n^{(1)})_n$ satisfy (\ref{disconnected conditions}), hence such that the induced graph $G(\mathcal P_n,r_n)$ is disconnected w.h.p.. We already have the following result for $R_n^{(1)}$.
\begin{prop}\label{R_n^(1) light tail asymptotic}
We have 
$$
\mathbb P\left(\mathcal P_n\cap B(0,R_n^{(1)})^c=\emptyset\right)= e^{-\alpha(n)},
$$
where $\alpha(n)\sim e^{w(n)}$.
\end{prop}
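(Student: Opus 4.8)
The plan is to compute the void probability
$$
\mathbb P\left(\mathcal P_n\cap B(0,R_n^{(1)})^c=\emptyset\right)=\exp\left(-n\int_{|x|\geq R_n^{(1)}}q(x)\,dx\right)
$$
and to show that the exponent $\alpha(n):=n\int_{|x|\geq R_n^{(1)}}q(x)\,dx$ is asymptotic to $e^{w(n)}$. Since $q$ is radial and asymptotically of the form $C\exp(-\psi(\rho))$ (the slowly varying prefactor $L$ being asymptotically negligible, as explained after Definition~\ref{defn: light tail}), I would write
$$
\alpha(n)\sim C\,n\,s_{d-1}\int_{R_n^{(1)}}^\infty \rho^{d-1}\exp(-\psi(\rho))\,d\rho.
$$
The natural way to handle this tail integral is a Laplace-type (Watson's lemma style) asymptotic expansion: substitute to make $\psi$ the integration variable, or integrate by parts once using $a(\rho)=1/\psi'(\rho)$, exploiting that $\psi$ is of von Mises type so that $a'\to 0$. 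This yields
$$
\int_{R_n^{(1)}}^\infty \rho^{d-1}e^{-\psi(\rho)}\,d\rho\sim \frac{(R_n^{(1)})^{d-1}}{\psi'(R_n^{(1)})}\,e^{-\psi(R_n^{(1)})},
$$
which is the standard extreme-value-theory estimate for densities in the von Mises class.

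Next I would substitute the defining relation for $R_n^{(1)}$, namely
$$
\psi(R_n^{(1)})=\log n+(d-1)\log\psi^{\leftarrow}(\log n)-\log\psi'(\psi^{\leftarrow}(\log n))-w(n),
$$
so that $e^{-\psi(R_n^{(1)})}=\frac{1}{n}\cdot\frac{\psi'(\psi^{\leftarrow}(\log n))}{(\psi^{\leftarrow}(\log n))^{d-1}}\,e^{w(n)}$. Multiplying by $n$ cancels the factor $n$, and it remains to check that the ratio
$$
\frac{(R_n^{(1)})^{d-1}}{\psi'(R_n^{(1)})}\cdot\frac{\psi'(\psi^{\leftarrow}(\log n))}{(\psi^{\leftarrow}(\log n))^{d-1}}\longrightarrow 1.
$$
For this I would argue that $R_n^{(1)}\sim\psi^{\leftarrow}(\log n)$: indeed $\psi(R_n^{(1)})=\log n(1+o(1))$ because the correction terms $(d-1)\log\psi^{\leftarrow}(\log n)$, $\log\psi'(\psi^{\leftarrow}(\log n))$ and $w(n)$ are all $o(\log n)$ (here one uses $\psi\in RV_v$, $v>0$, so $\psi^{\leftarrow}\in RV_{1/v}$ grows polynomially and its logarithm is $o(\log n)$; $\psi'\in RV_{v-1}$ similarly; and $w(n)=o(\log\log n)=o(\log n)$), and then applying $\psi^{\leftarrow}$, which is itself regularly varying hence "asymptotically multiplicative", gives $R_n^{(1)}\sim\psi^{\leftarrow}(\psi(R_n^{(1)}))\sim\psi^{\leftarrow}(\log n)$. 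Then $(R_n^{(1)})^{d-1}\sim(\psi^{\leftarrow}(\log n))^{d-1}$ and $\psi'(R_n^{(1)})\sim\psi'(\psi^{\leftarrow}(\log n))$ both by regular variation (using that $\psi'\in RV_{v-1}$, via the second cited theorem on derivatives of regularly varying functions, since $\psi'$ is eventually monotone), so the ratio tends to $1$ and $\alpha(n)\sim e^{w(n)}$, as claimed. Finally $w(n)\to\infty$ forces the void probability $e^{-\alpha(n)}\to 0$, consistent with condition~(\ref{disconnected conditions}).

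The main obstacle I expect is making the tail-integral asymptotic
$$
\int_{R}^\infty \rho^{d-1}e^{-\psi(\rho)}\,d\rho\sim \frac{R^{d-1}}{\psi'(R)}e^{-\psi(R)}
$$
fully rigorous uniformly as $R=R_n^{(1)}\to\infty$, rather than just for a fixed lower limit. The clean route is to write $\rho^{d-1}e^{-\psi(\rho)}=-\rho^{d-1}a(\rho)\frac{d}{d\rho}e^{-\psi(\rho)}$ with $a=1/\psi'$, integrate by parts, and control the remainder using $a'(\rho)\to 0$ together with the polynomial growth of $\rho^{d-1}$ relative to $\psi(\rho)\in RV_v$; the von Mises conditions are exactly what is needed here, and the eventual monotonicity of $\psi'$ keeps the error terms of one sign. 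Everything else is bookkeeping with the regular-variation calculus (Theorem~\ref{characterization thm} and the differentiation theorem) and substitution of the definition of $R_n^{(1)}$.
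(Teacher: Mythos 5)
Your proposal is correct and follows the same overall structure as the paper's proof: write the void probability as $\exp\bigl(-n\int_{|x|\geq R_n^{(1)}}q\bigr)$, establish the extreme-value-theory tail asymptotic $\int_R^\infty\rho^{d-1}e^{-\psi(\rho)}\,d\rho\sim\frac{R^{d-1}}{\psi'(R)}e^{-\psi(R)}$, and then substitute the defining relation for $R_n^{(1)}$. The one place you diverge is in how you prove the tail asymptotic: the paper performs the change of variables $z\leftrightarrow R+z/\psi'(R)$ and applies the mean value theorem plus dominated convergence to show the resulting integral tends to $\int_0^\infty e^{-z}\,dz=1$, whereas you integrate by parts with $a=1/\psi'$ and control the remainder via the von Mises condition $a'\to0$ together with $a(\rho)/\rho\to0$. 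Both are standard routes to the same estimate and both genuinely use the von Mises hypotheses; neither is clearly preferable. In fact your write-up is more complete than the paper's on the final bookkeeping step: the paper simply asserts that $n\frac{R^{d-1}}{\psi'(R)}e^{-\psi(R)}\sim e^{w(n)}$ ``by the assumptions on $R$,'' while you explicitly verify $R_n^{(1)}\sim\psi^{\leftarrow}(\log n)$ and use the uniform convergence theorem for regularly varying functions to send the ratio $\frac{(R_n^{(1)})^{d-1}}{\psi'(R_n^{(1)})}\cdot\frac{\psi'(\psi^{\leftarrow}(\log n))}{(\psi^{\leftarrow}(\log n))^{d-1}}$ to $1$. (Both you and the paper silently absorb the harmless multiplicative constant $Cs_{d-1}$ into $e^{w(n)}$, which is fine since $w(n)\to\infty$ and only the divergence of $\alpha(n)$ is used downstream.)
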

\begin{proof}
Denote $R:=R_n^{(1)}$ to simplify the notation in the proof. We have
$$
\mathbb P\left(\mathcal P_n\cap B(0,R_n^{(1)})^c\right)=\exp\left(-n\int_{|x|\geq R}q(x) dx\right),
$$
and recalling that $q$ is radial, appealing to a spherical change of coordinates, then to the change of variables $z \leftrightarrow R+z/\psi'(R)$, we have
\begin{align}\label{integral}
 n\int_{|x|\geq R}q(x)dx=&s_{d-1}n\int_{R}^\infty z^{d-1}q(z)dz\\
    &=s_{d-1}n\frac{R^{d-1}}{\psi'(R)}q(R)\\ \nonumber
    &\times \int_0^\infty\left(1+\frac{z}{\psi'(R)R}\right)^{d-1}\frac{q(R+z/\psi'(R))}{q(R)}dz,
\end{align}
where $s_{d-1}$ denotes the surface area of a Euclidean unit sphere in $\R^d$.\\
Let us estimate the integral on the RHS of (\ref{integral}), as $n\to \infty$. By the mean value theorem, there exists $t$ between $R$ and $R+z/\psi'(R)$ such that
$$
\psi(R)-\psi(R+z/\psi'(R))=-\frac{\psi'(t)}{\psi'(R)}z.
$$
Since $\psi'(R)^{-1}=o(R)$, then $t\sim R$ and by the regular variation of $\psi'$
$$
\psi'(t)\sim \psi'(R);
$$
hence as $n\to \infty$,
$$
\psi(R)-\psi(R+z/\psi'(R))\to -z,
$$
from which we find by the dominated convergence theorem that the integral on the RHS above converges as $n\to \infty$ to
$$
\int_0^\infty e^{-z}dz=1.
$$
For the remaining factor on the RHS of (\ref{integral}), we have by the assumptions on $R$
$$
n\frac{R^{d-1}}{\psi'(R)}e^{-\psi(R)}\sim e^{w(n)}.
$$

\end{proof}

\begin{corollary}\label{cor:light tail R_n^(1) choice}
We have $R_n^{(0)}\leq R_n^{(1)}$ and
$$
\lim_{n\to \infty}\mathbb P\left(\mathcal P_n\cap B(0,R_n^{(1)})^c=\emptyset\right)=0.
$$
\end{corollary}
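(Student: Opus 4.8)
The plan is to derive both assertions directly from the definitions of $R_n^{(0)}$, $R_n^{(1)}$ and from Proposition \ref{R_n^(1) light tail asymptotic}. For the inequality $R_n^{(0)}\leq R_n^{(1)}$, I would compare $\psi(R_n^{(0)})=\log n$ with $\psi(R_n^{(1)})=\log n+(d-1)\log\psi^{\leftarrow}(\log n)-\log\psi'(\psi^{\leftarrow}(\log n))-w(n)$. Since $\psi$ is of von Mises type, $\psi'>0$ asymptotically, so $\psi$ is eventually increasing; hence it suffices to check that the correction term $(d-1)\log\psi^{\leftarrow}(\log n)-\log\psi'(\psi^{\leftarrow}(\log n))-w(n)$ is eventually nonnegative, i.e.\ that $\psi(R_n^{(1)})\geq\psi(R_n^{(0)})$ for large $n$. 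Writing $u:=\psi^{\leftarrow}(\log n)\to\infty$, this amounts to showing $u^{d-1}/\psi'(u)\to\infty$ faster than $e^{w(n)}$. Because $\psi\in RV_v$ with $v>0$ and $\psi'$ is eventually monotone, $\psi'\in RV_{v-1}$, so $u^{d-1}/\psi'(u)\in RV_{d-v}$ up to a slowly varying factor; one then uses the explicit form of $R_n^{(1)}$ together with the growth rate of $u=\psi^{\leftarrow}(\log n)$ and the hypothesis $w(n)=o(\log\log n)$ to conclude. Since $w(n)$ is only logarithmically-of-logarithmic size while $\log(u^{d-1}/\psi'(u))$ is of order $\log\log n$ (as $u$ is a regularly varying function of $\log n$), the correction term is eventually nonnegative, giving $\psi(R_n^{(1)})\geq\log n=\psi(R_n^{(0)})$ and hence $R_n^{(1)}\geq R_n^{(0)}$ by monotonicity of $\psi$.

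For the second assertion, Proposition \ref{R_n^(1) light tail asymptotic} gives
$$
\mathbb P\left(\mathcal P_n\cap B(0,R_n^{(1)})^c=\emptyset\right)=e^{-\alpha(n)},\qquad \alpha(n)\sim e^{w(n)}.
$$
Since $w(n)\to\infty$ by assumption, $e^{w(n)}\to\infty$, hence $\alpha(n)\to\infty$, hence $e^{-\alpha(n)}\to 0$. This immediately yields $\lim_{n\to\infty}\mathbb P\left(\mathcal P_n\cap B(0,R_n^{(1)})^c=\emptyset\right)=0$.

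The only genuine point requiring care is the monotonicity argument: one must verify that the bracketed correction term defining $\psi(R_n^{(1)})$ does not overwhelm $\log n$ in the negative direction, which is where the constraint $w(n)=o(\log\log n)$ is used together with the regular-variation bookkeeping for $\psi$, $\psi'$ and $\psi^{\leftarrow}$. I expect this to be the main (though still routine) obstacle; the rest is an immediate consequence of Proposition \ref{R_n^(1) light tail asymptotic} and $w(n)\to\infty$. Note this corollary supplies exactly the first and third conditions of \eqref{disconnected conditions}, leaving only $\lim_n E_{R_n^{(0)}}=\infty$ to be established separately for each decay regime.
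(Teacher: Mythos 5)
The paper offers no written proof of this corollary: it is presented as an immediate consequence of Proposition~\ref{R_n^(1) light tail asymptotic}, and your treatment of the second assertion is exactly that intended argument ($\alpha(n)\sim e^{w(n)}\to\infty$, hence $e^{-\alpha(n)}\to 0$). That part is correct and complete.

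Your verification of $R_n^{(0)}\leq R_n^{(1)}$, however, has a genuine gap in the sign analysis. Writing $u=\psi^{\leftarrow}(\log n)$, you correctly observe that $u^{d-1}/\psi'(u)\in RV_{d-v}$ as a function of $u$, so that
$$
(d-1)\log u-\log\psi'(u)=\frac{d-v}{v}\,\bigl(\log\log n\bigr)\bigl(1+o(1)\bigr),
$$
but you then assert this quantity is positive of order $\log\log n$ and hence dominates $w(n)=o(\log\log n)$. That only holds when $v<d$. This covers the subexponential and exponential cases (where $v\leq 1<d$), but in the superexponential regime nothing prevents $v\geq d$ (e.g.\ $d=2$, $\psi(z)=z^{3}$), in which case the correction term $(d-1)\log u-\log\psi'(u)-w(n)$ tends to $-\infty$ at rate $\Theta(\log\log n)$, giving $\psi(R_n^{(1)})<\psi(R_n^{(0)})$ and hence $R_n^{(1)}<R_n^{(0)}$ for large $n$. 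So the monotonicity argument you flag as "routine" actually fails there, and indeed the inequality as literally stated appears false in that regime (a defect of the corollary itself, not only of your proof; the saving grace for the paper's application is that $R_n^{(0)}-R_n^{(1)}$ is then $O(\log\log n/\psi'(\psi^{\leftarrow}(\log n)))=o(1)$, so the disconnection argument can be repaired, but that requires saying something). You should either restrict the first claim to $v<d$ or replace it by the weaker statement that $R_n^{(0)}\leq R_n^{(1)}+o(1)$ and check that this still suffices for Lemma~\ref{lemma: disconnectivity}.
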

In light of Lemma~\ref{lemma: disconnectivity}, it remains only to find conditions on the radius parameter of the graph $r_n$ such that $\lim_{n\to \infty}E_{R_n^{(0)}}=\infty$, in order to conclude that $G(\mathcal P_n,r_n)$ is disconnected w.h.p.. Recall that, given a graph $ G(\mathcal P_n,r_n)$, $E_{R_n}$ denotes the expected number of isolated vertices contained in $B(0,R_n).$

\subsection{Subexponential decay}
\begin{prop}\label{R_n^(0) subexp}
Suppose that $q$ is of the form (\ref{light tail form}) with $\psi\in RV_v$ and $v\in (0,1)$, and suppose that $r_n\equiv 1$, then
$$
\lim_{n\to \infty}E_{R_n^{(0)}} = \infty.
$$
\end{prop}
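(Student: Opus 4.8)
The plan is to exhibit a thin shell of radii just inside $R_n^{(0)}=\psi^\leftarrow(\log n)$ on which a vertex is isolated with probability bounded away from $0$, and whose width tends to infinity; the expected number of isolated vertices in this shell will then diverge, which gives the claim. First I would record the identity for the expected number of isolated vertices. Arguing exactly as in the proof of Proposition~\ref{prop: heavy tail R0} (with $r_n\equiv 1$), and using that $q$ is radial and non-increasing,
$$
E_{R_n^{(0)}}=n s_{d-1}\int_0^{R_n^{(0)}}\rho^{d-1}q(\rho)\exp\!\Big(-n\int_{B(y_\rho,1)}q(x)\,dx\Big)d\rho\ \ge\ n s_{d-1}\int_0^{R_n^{(0)}}\rho^{d-1}q(\rho)\,e^{-n\omega_d q(\rho-1)}\,d\rho,
$$
where $y_\rho$ is any point with $|y_\rho|=\rho$ and we used $\int_{B(y_\rho,1)}q\le\omega_d q(\rho-1)$ (valid for $\rho\ge 1$, since $q$ is non-increasing and $|z|\ge\rho-1$ on $B(y_\rho,1)$).

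Next I would choose the shell. Since $\psi\in RV_v$ with $v\in(0,1)$ and $\psi'$ is eventually monotone, the theorem on derivatives of regularly varying functions gives $\psi'\in RV_{v-1}$ with $v-1<0$; combined with $R_n^{(0)}=\psi^\leftarrow(\log n)\to\infty$ this yields $\psi'(R_n^{(0)})\to 0$, while $R_n^{(0)}\psi'(R_n^{(0)})\to\infty$ because $\mathrm{id}\cdot\psi'\in RV_v$ with $v>0$. Set $\ell_n:=1/\psi'(R_n^{(0)})$, so that $\ell_n\to\infty$ and $\ell_n=o(R_n^{(0)})$, and take the shell to be $\rho\in[R_n^{(0)}-\ell_n,\,R_n^{(0)}]$ (which lies in the large-$\rho$ regime for $n$ large, so that $\psi$ is increasing and $L$ is bounded above and below by positive constants there).

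Now I estimate on the shell. For $\rho$ in the shell, the mean value theorem gives $\psi(R_n^{(0)})-\psi(\rho-1)=\psi'(\xi)(R_n^{(0)}-\rho+1)$ for some $\xi$ with $\xi/R_n^{(0)}\to 1$; since $\psi'$ is regularly varying, the uniform convergence theorem gives $\psi'(\xi)\sim\psi'(R_n^{(0)})$, hence $\psi(R_n^{(0)})-\psi(\rho-1)\le(1+o(1))\psi'(R_n^{(0)})(\ell_n+1)=O(1)$. Using $q=L e^{-\psi}$ with $L$ bounded and the defining relation $n e^{-\psi(R_n^{(0)})}=1$, this gives $n q(\rho-1)=O(1)$ uniformly on the shell, so $e^{-n\omega_d q(\rho-1)}\ge c_1>0$ there. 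For the other factor, $\psi$ non-decreasing and $L$ bounded below give $\rho^{d-1}q(\rho)\ge (R_n^{(0)}-\ell_n)^{d-1}c_2 e^{-\psi(R_n^{(0)})}=(R_n^{(0)}-\ell_n)^{d-1}c_2/n$. Restricting the integral to the shell,
$$
E_{R_n^{(0)}}\ \ge\ n s_{d-1}\int_{R_n^{(0)}-\ell_n}^{R_n^{(0)}}c_1 c_2\,\frac{(R_n^{(0)}-\ell_n)^{d-1}}{n}\,d\rho\ =\ c_1 c_2 s_{d-1}\,(R_n^{(0)}-\ell_n)^{d-1}\,\ell_n\ \longrightarrow\ \infty,
$$
since $R_n^{(0)}-\ell_n\sim R_n^{(0)}\to\infty$, $\ell_n\to\infty$, and $d-1\ge 1$.

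The hard part is controlling $\psi$ over the shell: the crude ratio bound from regular variation only yields $\psi(R_n^{(0)})-\psi(\rho)=o(\log n)$, which is far too weak to keep the isolation probability from collapsing. One must instead couple the shell width to $1/\psi'(R_n^{(0)})$ and use the mean value theorem together with the uniform convergence theorem for the regularly varying $\psi'$. This is precisely where subexponentiality ($v<1$) is essential, as it forces $\psi'(R_n^{(0)})\to 0$ and hence lets the shell width $\ell_n$ diverge; for exponential decay ($v=1$) one has $\psi'(R_n^{(0)})$ slowly varying, the shell can no longer be taken of diverging width in this way, and an extra hypothesis on $r_n$ is needed (cf.\ Theorem~\ref{thm:exponential}).
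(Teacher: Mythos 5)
Your proof is correct, but it takes a genuinely different route from the paper's. Both arguments start from the same Mecke-type identity for $E_{R_n^{(0)}}$ and the same bound $\nu(B(y_\rho,1))\le\omega_d q(\rho-1)$, and both ultimately rest on the one fact that $v<1$ forces $\psi'(R_n^{(0)})\to0$. But where you localize the integral to a boundary shell $[R_n^{(0)}-\ell_n,R_n^{(0)}]$ of diverging width $\ell_n=1/\psi'(R_n^{(0)})$ and bound the integrand pointwise from below there, the paper keeps essentially the whole range $[\alpha(n),R_n^{(0)}-1]$, uses $\psi'(t)=o(1)$ only to absorb the shift $\psi(\rho+1)\le\psi(\rho)+1$, and then recognizes $n\psi'(\rho)e^{-\psi(\rho)}e^{-n\omega_d e^{-\psi(\rho)}}$ as an exact derivative, reducing the integral to a difference of two exponentials (one shown to be $\Theta(1)$, the other $o(1)$) times a prefactor $(L(\alpha(n)))^{-1}\alpha(n)^{d-v}=\omega(1)$. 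Your version buys a cleaner and quantitatively stronger conclusion, $E_{R_n^{(0)}}\gtrsim (R_n^{(0)})^{d-1}/\psi'(R_n^{(0)})$, and it correctly pinpoints why the argument degenerates at $v=1$ (the shell width stops diverging); the paper's antiderivative computation is weaker here but is the template that carries over verbatim, with the extra factors $r^{-d}$ and $e^{-r\psi'(R)}$, to the exponential and superexponential propositions. The only step you should make fully explicit is the uniform control $\psi'(\xi)\sim\psi'(R_n^{(0)})$ over the whole shell, which needs the uniform convergence theorem for regularly varying functions on the compact set of ratios $\xi/R_n^{(0)}\in[1/2,1]$; you invoke it, and it applies, so there is no gap.
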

\begin{proof}
To simplify notation in the proof, let $R:=R_n^{(0)}$. We have
\begin{align*}
    E_R&\gtrsim n\int_0^R\rho^{d-1}e^{-\psi(\rho)}e^{-n\omega_d e^{-\psi(\rho-1)}}d\rho\\
    &\gtrsim n\int_{\alpha(n)}^{R-1} \rho^{d-1}e^{-\psi(\rho+1)}e^{-n\omega_d e^{-\psi(\rho)}}d\rho,
\end{align*}
where $\alpha(n)\to \infty$ arbitrarily slowly as $n\to \infty$.\\

By the mean value theorem, for all $\rho\in [\alpha(n),R-1]$, there exists $t\in [\rho,\rho+1]$ such that
$$
\psi(\rho+1)-\psi(\rho) = \psi'(t),
$$
and $\psi'\in RV_{v-1}$ with $v-1<0$, hence $\lim_{n\to \infty}\psi'(t) = 0$. In particular, for $n$ sufficiently large, we have for all $\rho \in [\alpha(n),R-1]$
$$
\psi(\rho+1)\leq \psi(\rho) + 1,
$$
hence
$$
E_R\gtrsim n\int_{\alpha(n)}^{R-1}\rho^{d-1}e^{-\psi(\rho)}e^{-n\omega_d e^{-\psi(\rho)}}d\rho.
$$
We can write $\psi'\in RV_{v-1}$ in the form
$$
\psi'(\rho) = L(\rho)\rho^{v-1},
$$
where $L$ is slowly varying. We know by assumptions that $v< 1<d$, hence we can write
$$
\rho^{d-1} = (L(\rho))^{-1}\rho^{d-v}\psi'(\rho),
$$
and we find that
\begin{align*}
E_R&\gtrsim (L(\alpha(n)))^{-1}\alpha(n)^{d-v}\int_{\alpha(n)}^{R-1}\big(n\psi'(\rho)e^{-\psi(\rho)}\big)e^{-n\omega_d e^{-\psi(\rho)}}d\rho\\
&\gtrsim (L(\alpha(n)))^{-1}\alpha(n)^{d-v}\left[\exp\big(-n\omega_de^{-\psi(\rho)}\big)\right]_{\alpha(n)}^{R-1}\\
&\gtrsim (L(\alpha(n)))^{-1}\alpha(n)^{d-v}\left(\exp\big(-n\omega_de^{-\psi(R-1)}\big)-\exp\big(-n\omega_de^{-\psi(\alpha(n))}\big)\right).
\end{align*}
Since $(L(\alpha(n)))^{-1}\alpha(n)^{d-v}=\omega(1)$, in order to show that $E_R=\omega(1)$, it suffices to show that the sum of terms in the parentheses above is $\Theta(1)$. With $\alpha(n)$ growing sufficiently slowly as $n\to \infty$, we already have that
$$
\exp\big(-n\omega_de^{-\psi(\alpha(n))}\big)=o(1).
$$
As before by the mean value theorem, there exists $t_0\in [R-1,R]$ such that
$$
\psi(R)=\psi(R-1)+\psi'(t_0),
$$
but $\psi'(t_0)=o(1)$ (since $\psi'\in RV_{v-1}$ and $v<1$ by assumptions) and by construction of $R=R_n^{(0)}$, $e^{-\psi(R)}=n^{-1}$, hence
$$
n\omega_de^{-\psi(R-1)}=\Theta(e^{-\psi'(t_0)}) = \Theta(1).
$$
\end{proof}

This result, together with Section~\ref{sec: heavy tail}, completes the proof of Theorem~\ref{thm: subexponential}. If $q$ has subexponential decay (i.e., a heavy-tail or a light tail with $\psi\in RV_v$ and $v\in (0,1)$), then the graph $G(\mathcal P_n,1)$ is disconnected w.h.p..

\subsection{Exponential decay}
\begin{prop}\label{exp R_n^(0)}
Suppose that $q$ is of the form (\ref{light tail form}) with $\psi\in RV_1$. 
If
$$
r_n \psi'(R_n^{(0)}) = O(1),
$$
then 
$$
\lim_{n\to \infty}E_{R_n^{(0)}} = \infty.
$$
\end{prop}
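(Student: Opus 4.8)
The plan is to follow the same template used in the subexponential case (Proposition~\ref{R_n^(0) subexp}), replacing the estimate $\psi'(t)=o(1)$ on unit increments — which fails now that $\psi\in RV_1$ and $\psi'$ is slowly varying — by the weaker, but sufficient, statement that $\psi'$ stays bounded above on the relevant range. Write $R:=R_n^{(0)}$, so $\psi(R)=\log n$ and $e^{-\psi(R)}=n^{-1}$. As in the subexponential proof, lower-bound $E_R$ by integrating the probability that a point at radius $\rho$ is isolated: since two points are joined when their distance is at most $r_n$, a vertex at $y$ with $|y|=\rho$ is isolated iff $\mathcal P_n\cap B(y,r_n)=\emptyset$, which has probability $\exp(-n\int_{B(y,r_n)}q)$, and $\int_{B(y,r_n)}q\le \omega_d r_n^d q(\rho-r_n)\lesssim r_n^d e^{-\psi(\rho-r_n)}$ using monotonicity of $q$ near infinity. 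This gives, up to constants,
\begin{align*}
E_R\gtrsim n\int_{\alpha(n)}^{R-r_n}\rho^{d-1}e^{-\psi(\rho+r_n)}\exp\!\big(-C n r_n^d e^{-\psi(\rho)}\big)\,d\rho,
\end{align*}
with $\alpha(n)\to\infty$ as slowly as we like.

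Next I would handle the shift $\psi(\rho+r_n)$ versus $\psi(\rho)$. By the mean value theorem $\psi(\rho+r_n)-\psi(\rho)=r_n\psi'(t)$ for some $t\in[\rho,\rho+r_n]$, and since $\psi\in RV_1$ with $\psi'$ eventually monotone, $\psi'\in RV_0$ is slowly varying, hence bounded on $[\alpha(n),R]$ by (say) $2\psi'(R)$ for $n$ large; therefore $\psi(\rho+r_n)\le \psi(\rho)+2r_n\psi'(R)$, and by hypothesis $r_n\psi'(R)=r_n\psi'(R_n^{(0)})=O(1)$, so $e^{-\psi(\rho+r_n)}\gtrsim e^{-\psi(\rho)}$ with an $n$-independent constant. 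This reduces the bound to
\begin{align*}
E_R\gtrsim n\int_{\alpha(n)}^{R-r_n}\rho^{d-1}e^{-\psi(\rho)}\exp\!\big(-Cnr_n^d e^{-\psi(\rho)}\big)\,d\rho.
\end{align*}
Now write $\psi'(\rho)=L(\rho)$ slowly varying and substitute $\rho^{d-1}=L(\rho)^{-1}\rho^{d-1}\psi'(\rho)$ so the integrand contains the exact derivative factor $n\psi'(\rho)e^{-\psi(\rho)}$ of $\exp(-Cnr_n^d e^{-\psi(\rho)})$ up to the constant $Cr_n^d$; pulling the slowly varying prefactor out at the lower endpoint (it is $\omega(1)$ there, e.g.\ $L(\alpha(n))^{-1}\alpha(n)^{d-1}=\omega(1)$ since $d\ge 2$), integrate to get
\begin{align*}
E_R\gtrsim \frac{L(\alpha(n))^{-1}\alpha(n)^{d-1}}{Cr_n^d}\Big(\exp\!\big(-Cnr_n^d e^{-\psi(R-r_n)}\big)-\exp\!\big(-Cnr_n^d e^{-\psi(\alpha(n))}\big)\Big).
\end{align*}
The second exponential is $o(1)$ for $\alpha(n)$ growing slowly enough; for the first, use $\psi(R-r_n)=\psi(R)-r_n\psi'(t_0)$ with $t_0\in[R-r_n,R]$ and $r_n\psi'(t_0)=O(1)$ again, so $nr_n^d e^{-\psi(R-r_n)}=r_n^d e^{r_n\psi'(t_0)}=O(r_n^d)=O(1)$ (recall $r_n\le 1$), whence the first exponential is $\Theta(1)$. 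Since the slowly varying prefactor is $\omega(1)$ and $r_n^{-d}\ge 1$, the whole lower bound is $\omega(1)$, giving $E_{R_n^{(0)}}\to\infty$.

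The main obstacle — and the only place the proof genuinely differs from the subexponential case — is the control of the unit/$r_n$-increment shifts of $\psi$: in the subexponential regime one had $\psi'(t)\to 0$, which killed both $e^{-\psi(\rho+r_n)}/e^{-\psi(\rho)}$ and $nr_n^d e^{-\psi(R-r_n)}$ outright, whereas here $\psi'$ merely stays slowly varying, so one must use the hypothesis $r_n\psi'(R_n^{(0)})=O(1)$ twice, once to compare the densities across the shift inside the integral and once to show the dominating-term exponent $nr_n^d e^{-\psi(R-r_n)}$ remains bounded. A minor subtlety is justifying the monotone/boundedness bound $\psi'(\rho)\le 2\psi'(R)$ on $[\alpha(n),R]$: this uses that $\psi'$ is eventually monotone and slowly varying (so it cannot oscillate wildly on an interval whose endpoints are comparable in the slowly varying scale), and one should also keep track that $r_n$ may tend to $0$, which only helps since it makes the exponent smaller. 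Otherwise the computation is the same telescoping-of-an-exponential trick as before.
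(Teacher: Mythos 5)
Your proof is correct and follows essentially the same route as the paper's: lower-bound $E_{R_n^{(0)}}$ by the integral of the isolation probability, control the shift $\psi(\rho+r_n)-\psi(\rho)=r_n\psi'(t)$ via the hypothesis $r_n\psi'(R_n^{(0)})=O(1)$ together with the slow variation and eventual monotonicity of $\psi'$, and recognize $n r_n^d\psi'(\rho)e^{-\psi(\rho)}$ as the derivative of the inner exponential so that the integral telescopes to a difference of exponentials bounded below by a constant. The one place you diverge is the source of the final $\omega(1)$ factor: the paper discards the ratio $\rho^{d-1}/\psi'(\rho)$ entirely (keeping only $\rho^{d-1}\geq\psi'(\rho)$) and instead relies on the prefactor $r_n^{-d}e^{-r_n\psi'(R_n^{(0)})}$ tending to infinity, whereas you retain the lower-endpoint factor $\alpha(n)^{d-1}/\psi'(\alpha(n))=\omega(1)$, exactly as in the subexponential case (Proposition~\ref{R_n^(0) subexp}). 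Your variant is in fact slightly more robust: the paper's prefactor is only $\Theta(1)$ when $r_n$ stays bounded away from $0$ (which the hypotheses permit), while your endpoint factor diverges in all cases; correspondingly, your reading of $nr_n^de^{-\psi(R_n^{(0)}-r_n)}$ as $O(1)$, hence of the leading exponential as $\Theta(1)$ rather than $1-o(1)$, is the correct one in that regime.
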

\begin{proof}
Let $R:= R_n^{(0)}$ and $r:=r_n$. 
The beginning of the proof proceeds as before. We have
$$
E_R\gtrsim n\int_{\alpha(n)}^{R-r} \rho^{d-1}e^{-\psi(\rho+r)}e^{-n\omega_dr^d e^{-\psi(\rho)}}d\rho,
$$
by the mean value theorem, there exists $t\in [\rho,\rho+r]$ such that
$$
\psi(\rho+r) = \psi(\rho)+r\psi'(t)\leq \psi(\rho)+r\psi'(R),
$$
and
$$
\rho^{d-1} \geq\psi'(\rho),
$$
hence
\begin{align*}
E_R&\gtrsim r^{-d}e^{-r\psi'(R)}\int_{\alpha(n)}^{R-r} \big(nr^d\psi'(\rho)e^{-\psi(\rho)}\big)e^{-n\omega_d r^d e^{-\psi(\rho)}}d\rho\\
&\gtrsim r^{-d}e^{-r\psi'(R)}\left(\exp\big(-n\omega_dr^de^{-\psi(R-r)}\big)-\exp\big(-n\omega_dr^de^{-\psi(\alpha(n))}\big)\right).
\end{align*}
Using the assumptions on $r$, we find that
$$
\lim_{n\to \infty}r^{-d}e^{-r\psi'(R)} = \infty,
$$
that
$$
n\omega_dr^de^{-\psi(R-r)} = o(1),
$$
and for $\alpha(n)$ growing sufficiently slowly
$$
 n\omega_dr^de^{-\psi(\alpha(n))}=\omega(1).
$$
Thus $E_R=\omega(1)$.

\end{proof}

This completes the proof of Theorem~\ref{thm:exponential}. If $q$ has exponential decay and
$$
r_n\psi'(\log n) = O(1),
$$
then the graph $G(\mathcal P_n,r_n)$ is disconnected w.h.p..



\subsection{Superexponential decay}
\begin{prop}\label{expected number of isolated vertices}
Suppose that $q$ satisfies (\ref{light tail form}) and that $\psi\in RV_v$ with $v>1$. If
$$
r_n\psi'(R_n^{(0)}) = o(\log \log n),
$$
then $$\lim_{n\to \infty} E_{R_n^{(0)}} = \infty.$$
\end{prop}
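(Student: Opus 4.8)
The plan is to mirror the exponential case, Proposition~\ref{exp R_n^(0)}, adjusting for the two genuinely new features of the superexponential regime. Write $R:=R_n^{(0)}$, $r:=r_n$, and fix once and for all a large absolute constant $\alpha_0$ lying past every ``eventually'' threshold used below (recall that here $L\equiv C$ asymptotically, $\psi>0$, $\psi'>0$, and $\psi'$ is eventually monotone). Exactly as in the previous propositions, Mecke's formula, the radial monotonicity of $q$ (so that $\int_{B(y,r)}q\le\omega_d r^d q(|y|-r)$) and the substitution $\rho\mapsto\rho+r$ give
\begin{equation*}
E_R\;\gtrsim\;n\int_{\alpha_0}^{R-r}\rho^{d-1}e^{-\psi(\rho+r)}e^{-n\omega_d r^d e^{-\psi(\rho)}}\,d\rho .
\end{equation*}
It will turn out that a fixed constant $\alpha_0$, rather than a slowly growing truncation, suffices.

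Next I would make two reductions. First, by the mean value theorem $\psi(\rho+r)\le\psi(\rho)+r\psi'(R)$ on $[\alpha_0,R-r]$: indeed $\psi'\in RV_{v-1}$ with $v-1>0$, and a regularly varying function of positive index that is eventually monotone is eventually increasing, so $\psi'(t)\le\psi'(R)$ for $\alpha_0\le t\le R$. This lets me pull out $e^{-r\psi'(R)}$. Second --- and this is the step that differs from the exponential case, where $v=1<d$ allowed $\rho^{d-1}\ge\psi'(\rho)$ --- when $v\ge d$ that bound fails, so instead I use that both $\rho\mapsto\rho^{d-1}$ and $\psi'$ are increasing on $[\alpha_0,R-r]$, whence $\rho^{d-1}/\psi'(\rho)\ge\alpha_0^{d-1}/\psi'(R)$. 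Extracting these factors leaves an integral that telescopes:
\begin{equation*}
n\int_{\alpha_0}^{R-r}\psi'(\rho)e^{-\psi(\rho)}e^{-n\omega_d r^d e^{-\psi(\rho)}}\,d\rho=\frac{1}{\omega_d r^d}\Bigl(e^{-n\omega_d r^d e^{-\psi(R-r)}}-e^{-n\omega_d r^d e^{-\psi(\alpha_0)}}\Bigr).
\end{equation*}
Writing $a_n:=n\omega_d r^d e^{-\psi(R-r)}$ and $b_n:=n\omega_d r^d e^{-\psi(\alpha_0)}$, this yields $E_R\gtrsim \dfrac{e^{-r\psi'(R)}}{\psi'(R)\,\omega_d r^d}\bigl(e^{-a_n}-e^{-b_n}\bigr)$.

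The analytic heart is to show this lower bound diverges. The key fact is that $\psi'(R)=\psi'(\psi^\leftarrow(\log n))$ is regularly varying in $\log n$ of index $\gamma:=(v-1)/v\in(0,1)$ (using $\psi^\leftarrow\in RV_{1/v}$, standard for the inverse of a regularly varying function), so $\log\psi'(R)\sim\gamma\log\log n$, and the hypothesis $r\psi'(R)=o(\log\log n)$ forces $r\to0$ and $e^{r\psi'(R)}=e^{o(\log\log n)}$, i.e.\ $1/r^d=\omega\bigl(\psi'(R)^d/(\log\log n)^d\bigr)$. From $\psi(R)=\log n$ and $\psi(R-r)\ge\log n-r\psi'(R)$ one gets $a_n\le\omega_d r^d e^{r\psi'(R)}\to0$ and $a_n/b_n=e^{\psi(\alpha_0)-\psi(R-r)}\to0$; combining $a_n\to0$ with $a_n=o(b_n)$ gives the elementary bound $e^{-a_n}-e^{-b_n}\gtrsim\min(1,b_n)$, hence
\begin{equation*}
\frac{e^{-a_n}-e^{-b_n}}{\omega_d r^d}\;\gtrsim\;\min\!\Bigl(\frac1{r^d},\,n\Bigr)\;=\;\omega\!\Bigl(\frac{\psi'(R)^d}{(\log\log n)^d}\Bigr),
\end{equation*}
the last equality because $n$ dominates any power of $\log n$. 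Substituting, $E_R\gtrsim e^{-r\psi'(R)}\,\omega\bigl(\psi'(R)^{d-1}/(\log\log n)^d\bigr)$; since $d\ge2$, taking logarithms gives $\log E_R\gtrsim(d-1)\gamma\log\log n-o(\log\log n)\to\infty$, so $E_{R_n^{(0)}}\to\infty$ (in fact $\gtrsim(\log n)^{(d-1)(v-1)/v-o(1)}$).

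The main obstacle is the interplay of the two new features, both traceable to $v>1$. Because $v$ may exceed $d$, the ``natural shell'' of thickness $\sim1/\psi'(R)$ about $R$ --- which carries the isolated vertices in the lighter-tailed cases --- may have only $o(1)$ expected points, so one is forced to integrate over all of $[\alpha_0,R-r]$ and exploit the telescoping identity, and correspondingly one cannot afford the crude bound $\rho^{d-1}\ge\psi'(\rho)$; the delicate point is to verify that replacing it by the much smaller $\alpha_0^{d-1}/\psi'(R)$ still leaves enough room. And, unlike the exponential case, the factor $e^{-a_n}-e^{-b_n}$ need not tend to $1$ --- it is small whenever $nr_n^d$ fails to grow --- so it must be tracked down to order $\min(1,b_n)$, and one must check that the prefactor $1/(\psi'(R)\,\omega_d r^d)$, which is large precisely because the hypothesis pushes $r_n$ below $1$ by a power of $\log n$, more than compensates. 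Confirming these estimates (and that a constant $\alpha_0$ is admissible) is where the real work lies.
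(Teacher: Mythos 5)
Your argument is correct, and its skeleton is the paper's: the same Mecke-type integral $E_R\gtrsim n\int_{\alpha}^{R-r}\rho^{d-1}e^{-\psi(\rho+r)}e^{-n\omega_d r^d e^{-\psi(\rho)}}d\rho$, the same mean-value extraction of the factor $e^{-r\psi'(R)}$, and the same key step of recognizing $n\omega_d r^d\psi'(\rho)e^{-\psi(\rho)}e^{-n\omega_d r^d e^{-\psi(\rho)}}$ as an exact derivative so that the integral telescopes, followed by an asymptotic comparison of the prefactor against $e^{-r\psi'(R)}=e^{o(\log\log n)}$. Where you deviate is in the bookkeeping, and both deviations are sound. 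First, in place of the paper's regular-variation bound $\rho^{d-1}\geq R^{\varphi(d-v)-\epsilon}\psi'(\rho)$ you use the cruder monotonicity bound $\rho^{d-1}\geq \alpha_0^{d-1}\psi'(\rho)/\psi'(R)$; this costs only an extra factor $\psi'(R)^{-1}=(\log n)^{-(v-1)/v+o(1)}$, which is harmless since the surviving factor $\psi'(R)^{d-1}$ (using $d\geq 2$) still beats $e^{r\psi'(R)}$ and the $(\log\log n)^d$ loss. Second, you keep a fixed truncation $\alpha_0$ and bound the boundary terms by $e^{-a_n}-e^{-b_n}\gtrsim\min(1,b_n)$ with $a_n=o(b_n)$, $a_n\to 0$, rather than asserting, as the paper does, that $e^{-n\omega_d r_n^d e^{-\psi(\alpha(n))}}\to 0$; your version is actually more robust, since the paper's assertion tacitly requires $n r_n^d e^{-\psi(\alpha(n))}\to\infty$, which the hypothesis (an upper bound on $r_n$ only) does not guarantee, whereas your $\min(1/r_n^d,n)$ estimate covers the regime $nr_n^d=O(1)$ as well, where the conclusion follows because the prefactor $r_n^{-d}$ or the factor $n$ alone dominates every polylogarithmic quantity. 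In short: same route, slightly more elementary intermediate bounds, and a more careful treatment of the lower boundary term that removes an implicit assumption in the paper's write-up.
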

\begin{proof}
Let $\varphi(z)=\min\{z,0\}$, $z\in \R$, and let $R:=R_n^{(0)}$ and $r:=r_n$.

We have
\begin{align*}
    E_R&\gtrsim n\int_{r}^R\delta^{d-1}e^{-\psi(\delta)}e^{-n\omega_dr^de^{-\psi(\delta -r)}}d\delta\\
    &\gtrsim n \int_{\alpha(n)}^{R-r}(\delta+r)^{d-1}e^{-\psi(\delta+r)}e^{-n\omega_d r^de^{-\psi(\delta)}}d\delta, \\
    \end{align*}
    where $\alpha(n)\to \infty$ arbitrarily slowly as $n\to \infty$.
    For every $\delta\in [\alpha(n),R-r]$
    $$
    \delta^{d-1}\geq\delta^{v-1}R^{\varphi(d-v)},
    $$
    furthermore, we know by Proposition $2.5$ in \cite{heavyTail} that the derivative of $\psi\in RV_v$ satisfies
    $$
    \psi'\in RV_{v-1},
    $$
    hence there exists a slowly varying function $L$ such that for all $\delta\in \R_+$
    $$
    \psi'(\delta):=\delta^{v-1}L(\delta)\leq\delta^{v-1+\epsilon} ,
    $$
    for all $\epsilon>0$.
    We then find for all $\epsilon>0$
    $$
    \delta^{d-1}\geq R^{\varphi(d-v)-\epsilon}\psi'(\delta),
    $$
    and so
    \begin{align}
    E_R&\gtrsim r^{-d}e^{-r\psi'(R)(1+o(1))}R^{\varphi(d-v)-\epsilon}\int_{\alpha(n)}^{R-r}\left(nr^d\psi'(\delta)e^{-\psi(\delta)}\right)e^{-n\omega_d r^de^{-\psi(\delta)}}d\delta\nonumber\\
    &\gtrsim r^{-d}e^{-r\psi'(R)(1+o(1))}R^{\varphi(d-v)-\epsilon}\left(e^{-n\omega_d r^de^{-\psi(R-r)}}-e^{-n\omega_d r^de^{-\psi(\alpha(n))}}\right). \label{eqn}
    \end{align}
    We now show that the sum of terms in the parentheses in (\ref{eqn}) tends to $1$ as $n\to \infty$, while the remaining factor, on the left of the parentheses in (\ref{eqn}), tends to $\infty$.\\
    
    Using
    \begin{align*}
    \psi(R-r) &= \psi(R)-r\psi'(R)(1+o(1))\\
    &=\log n - r\psi'(R)(1+o(1)),
    \end{align*}
    we have
    $$
        n\omega_d r^de^{-\psi(R-r)} \sim \exp\big( d\log r + r\psi'(R)(1+o(1))\big);
    $$
    by assumptions
    $$
    r\psi'(R) = o(\log \log n),
    $$
    while
    $$
    d\log (r) \lesssim -\log \psi'(R) \lesssim -\log \log n,
    $$
    and so
    $$
    \lim_{n\to \infty}n\omega_d r^de^{-\psi(R-r)}=0
    $$
    and
    $$
    \lim_{n\to \infty}e^{-n\omega_d r^de^{-\psi(R-r)}}=1,
    $$
    and also, for $\alpha(n)$ growing sufficiently slowly,
    $$
    \lim_{n\to \infty}e^{-n\omega_d r^de^{-\psi(\alpha(n))}}=0.
    $$
    Finally, assuming without loss of generality that $\varphi(d-v) = d-v\leq 0$, we have
    \begin{align*}
        r^{-d}e^{-r\psi'(R)(1+o(1))}R^{\varphi(d-v)-\epsilon}&\gtrsim R^{d(v-1)+d-v-2\epsilon}e^{-\psi'(R)(1+o(1))}\\
        &\gtrsim \exp\big((v(d-1)-2\epsilon)\log \log n-r\psi'(R)(1+o(1))\big)\\
        &=\omega(1).
    \end{align*}
   
    Wrapping up the above, we deduce that
    $$
    \lim_{n\to \infty} E_R = \infty.
    $$

\end{proof}
This completes the proof of the first part of Theorem~\ref{thm:superexponential}. If $q$ has superexponential decay and
$$
r_n\psi'(\psi^\leftarrow(\log n)) = o(\log \log n),
$$
then the graph $G(\mathcal P_n,r_n)$ is disconnected w.h.p..


\section{Concentration Regime}
The second part of Theorem~\ref{thm:superexponential} is concerned with providing asymptotic conditions such that the sampled points satisfy some concentration properties.
These kinds of concentration inequalities are well-known in the case of bounded domains (e.g., Theorem $1.1$ in \cite{WassersteinRate} or Lemma $3.2$ in \cite{optimalCheeger}), but do not have an immediate analogue on $\R^d$. This is due to the fact that these results rely on the construction of a suitable partition of the domain into small cubes centered around the sampled points; however if the domain is unbounded, no such partition can be found for any given $n$, where there are only finitely many points in $\mathcal P_n$. For instance, the $\infty$-Wasserstein distance satisfies $\infty=d_H(\mathcal P_n,\R^d)\leq W_{\infty}(\nu_n,\nu),$ where $d_H$ denotes the Hausdorff distance, and $\nu_n$ is the empirical measure with respect to $\mathcal P_n$.

Instead, we shall restrict our attention to $B(0,R_n^{(0)})$ for a suitable increasing sequence of positive real numbers $(R_n^{(0)})_{n\in \N}$, such that for every $n\in \N$, concentration inequalities (analogous to Theorem $1$ in \cite{WassersteinRate} or Lemma $3.2$ in \cite{optimalCheeger}) hold for the sampled points contained in $B(0,R_n^{(0)})$, while very few sampled points exist outside of the ball.  

Recall the partition of $B(0,R_n^{(0)})$ into small cubes $\{Q_{i,n}\}_{i\in S(R_n^{(0)})}$ of side width $\gamma_n r_n$, constructed in Section~\ref{sec: outline of the results} (see definition in (\ref{boxes domain partition})).
We have the following general result, which can be viewed as a stronger version of Theorem $4.5$ in \cite{topoCrackle}.

\begin{thm}\label{unbounded discrepancy}
Suppose that the sampling density $q$ has superexponential decay (see Definition~\ref{defn: decays}). Suppose furthermore that $(r_n)_n$ satisfies
$$
r_n\psi'(\psi^{\leftarrow}(\log n)) = \omega(\log \log n),
$$
and that $\gamma_n=o(1)$ is such that
$$
\gamma_nr_n\psi'(\psi^\leftarrow(\log n))=\omega(1).
$$
Then, there exists sequences $(R_n^{(0)})_{n\in \N}$ and $(R_n^{(1)})_{n\in \N}$ such that
$$
R_n^{(1)}-R_n^{(0)}=o(r_n),
$$
and such that with probability going to $1$ as $n\to \infty$,
$$
\mathcal P_n\cap B(0,R_n^{(1)})^c = \emptyset,
$$
and for all $i\in S_n(R_n^{(0)})$
\begin{align*}
(1-\gamma_n)n\nu(Q_{i,n})\leq \mathcal P_n(Q_{i,n})\leq (1+\gamma_n)n\nu(Q_{i,n}).
\end{align*}
\end{thm}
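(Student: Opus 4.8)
The plan is to exhibit explicit radii $R_n^{(0)}<R_n^{(1)}$ and verify the three assertions separately: (i)~$R_n^{(1)}-R_n^{(0)}=o(r_n)$; (ii)~$\mathcal P_n\cap B(0,R_n^{(1)})^c=\emptyset$ w.h.p.; (iii)~the two-sided discrepancy bound on every cell $Q_{i,n}$, $i\in S_n(R_n^{(0)})$, w.h.p. Assertion (ii) follows from the Poisson void formula $\mathbb P(\mathcal P_n\cap B(0,R_n^{(1)})^c=\emptyset)=\exp\big(-n\int_{|x|\ge R_n^{(1)}}q\big)$, so it suffices to arrange $n\int_{|x|\ge R_n^{(1)}}q\to 0$. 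Assertion (iii) follows from the facts that $\mathcal P_n(Q_{i,n})$ is Poisson with mean $n\nu(Q_{i,n})$ and that these counts are independent across the disjoint cells, combined with the Chernoff bound $\mathbb P(|\mathrm{Po}(\lambda)-\lambda|\ge\gamma\lambda)\le 2e^{-\gamma^2\lambda/3}$ for $\gamma\in(0,1)$ (applicable since $\gamma_n\to 0$) and a union bound over the $|S_n(R_n^{(0)})|$ cells. In both cases everything is driven by masses $n\nu(\,\cdot\,)$ read off from the superexponential tail of $q$.

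For the radii, take $R_n^{(1)}$ defined by $\psi(R_n^{(1)})=\log n+(d-1)\log\psi^\leftarrow(\log n)-\log\psi'(\psi^\leftarrow(\log n))+w(n)$ with $w(n)\to\infty$ and $w(n)=o(\log\log n)$, i.e.\ the radius of Proposition~\ref{R_n^(1) light tail asymptotic} but with $+w(n)$ in place of $-w(n)$. Running the change of variables $z\leftrightarrow R_n^{(1)}+z/\psi'(R_n^{(1)})$ and the dominated-convergence estimate from the proof of that proposition verbatim now yields $n\int_{|x|\ge R_n^{(1)}}q\sim e^{-w(n)}\to 0$, which is (ii). Next set $R_n^{(0)}:=R_n^{(1)}-s_n$, where $s_n=o(r_n)$ is chosen so that $s_n\psi'(\psi^\leftarrow(\log n))=\omega(\log\log n)$; such $s_n$ exists precisely because the standing hypothesis $r_n\psi'(\psi^\leftarrow(\log n))=\omega(\log\log n)$ says exactly that $\log\log n/\psi'(\psi^\leftarrow(\log n))=o(r_n)$ (for instance $s_n=\sqrt{r_n\,\log\log n/\psi'(\psi^\leftarrow(\log n))}$). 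Assertion (i) is then immediate, and a mean value theorem argument (using $\psi'\in RV_{v-1}$ and $R_n^{(0)},R_n^{(1)}\sim\psi^\leftarrow(\log n)$) gives $\psi(R_n^{(0)})=\psi(R_n^{(1)})-\psi'(\psi^\leftarrow(\log n))s_n(1+o(1))$, which is $\le\log n-\tfrac12\psi'(\psi^\leftarrow(\log n))s_n$ for large $n$, since $\psi(R_n^{(1)})=\log n+O(\log\log n)$ while $\psi'(\psi^\leftarrow(\log n))s_n=\omega(\log\log n)$.

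For (iii), fix $i\in S_n(R_n^{(0)})$ and set $\lambda_{i,n}:=n\nu(Q_{i,n})$. From the construction $Q'_{i,n}\subseteq Q_{i,n}\subseteq B(0,R_n^{(0)})$, so $\mathrm{vol}(Q_{i,n})\ge(\gamma_nr_n)^d$, and since $q$ is radial, eventually non-increasing, with $L\sim C$, we have $\inf_{Q_{i,n}}q\gtrsim q(R_n^{(0)})\sim Ce^{-\psi(R_n^{(0)})}$; hence, uniformly in $i$,
$$\lambda_{i,n}\ \gtrsim\ n(\gamma_nr_n)^d e^{-\psi(R_n^{(0)})}\ \gtrsim\ (\gamma_nr_n)^d\exp\big(\tfrac12\psi'(\psi^\leftarrow(\log n))s_n\big)\ =:\ \lambda_n.$$
The $Q_{i,n}$ are disjoint subsets of $B(0,R_n^{(0)})$, so $|S_n(R_n^{(0)})|\le\omega_d (R_n^{(0)})^d/(\gamma_nr_n)^d$; because $R_n^{(0)}$ and $\psi'(\psi^\leftarrow(\log n))$ are polynomially bounded in $\log n$ (regular variation) and $1/(\gamma_nr_n)=O(\psi'(\psi^\leftarrow(\log n)))$, both $\log R_n^{(0)}$ and $\log(1/(\gamma_nr_n))$ are $O(\log\log n)$, whence $\log|S_n(R_n^{(0)})|=O(\log\log n)$. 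A union bound with the Poisson Chernoff bound then gives
$$\mathbb P\big(\exists\, i\in S_n(R_n^{(0)}):\ |\mathcal P_n(Q_{i,n})-\lambda_{i,n}|\ge\gamma_n\lambda_{i,n}\big)\ \le\ 2\,|S_n(R_n^{(0)})|\,e^{-\gamma_n^2\lambda_n/3}\ \longrightarrow\ 0,$$
the convergence holding because $\log(\gamma_n^2\lambda_n)\ge\tfrac12\psi'(\psi^\leftarrow(\log n))s_n-O(\log\log n)=\omega(\log\log n)$, so that $\gamma_n^2\lambda_n$ grows faster than any power of $\log n$, dwarfing $\log|S_n(R_n^{(0)})|=O(\log\log n)$. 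Together with (ii), all required events hold simultaneously w.h.p., which is the theorem.

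The main obstacle is the choice of $R_n^{(0)}$. It must lie strictly inside $R_n^{(1)}$ — quantitatively $\psi(R_n^{(0)})\le\log n-\omega(\log\log n)$ — for otherwise the cells abutting $\partial B(0,R_n^{(0)})$ would carry mass $n\nu(Q_{i,n})=\Theta((\gamma_nr_n)^d)$, which the hypotheses permit to tend to $0$, whence $\mathcal P_n(Q_{i,n})=0$ w.h.p.\ on many cells and the lower bound $(1-\gamma_n)n\nu(Q_{i,n})\le\mathcal P_n(Q_{i,n})$ fails; yet $R_n^{(0)}$ must simultaneously stay within $o(r_n)$ of $R_n^{(1)}$. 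Showing these demands are compatible — i.e.\ that one can take $s_n=o(r_n)$ with $\psi'(\psi^\leftarrow(\log n))s_n=\omega(\log\log n)$ — is precisely where the hypothesis $r_n\psi'(\psi^\leftarrow(\log n))=\omega(\log\log n)$ enters essentially, and is the crux of the argument. The remaining ingredients (the void formula and Poisson concentration, the inclusions $Q'_{i,n}\subseteq Q_{i,n}\subseteq B(0,R_n^{(0)})$ with the cardinality bound on $S_n(R_n^{(0)})$, and the tail asymptotics carried over from Proposition~\ref{R_n^(1) light tail asymptotic}) are routine.
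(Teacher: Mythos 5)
Your proposal is correct, and it follows the same overall architecture as the paper's proof (explicit radii expressed through $\psi^\leftarrow$, the Poisson void formula for $R_n^{(1)}$, a lower bound $n\nu(Q_{i,n})\ge n q(R_n^{(0)})(\gamma_n r_n)^d$ on each cell, and Poisson--Chernoff plus a union bound over $S_n(R_n^{(0)})$), but it closes the key step differently. The paper fixes $R_n^{(0)}=\psi^\leftarrow(A_n)$ with $A_n=\log n+d\log r_n+(d+2)\log\gamma_n-\log\log\big((\gamma_nr_n)^{-1}\psi^\leftarrow(\log n)\big)-\delta$, i.e.\ it tunes the per-cell exponent $n q(R_n^{(0)})r_n^d\gamma_n^{d+2}/3$ to be exactly $(Ce^\delta/3)\log\big((\gamma_nr_n)^{-1}\psi^\leftarrow(\log n)\big)$ with $Ce^\delta/3>d$, so that it beats the entropy term $d\log\big((\gamma_nr_n)^{-1}\psi^\leftarrow(\log n)\big)$ by a constant factor; it then verifies $B_n-A_n=O(\log\log n)$ and converts this into $R_n^{(1)}-R_n^{(0)}=o(r_n)$ via the mean value theorem, exactly where the hypothesis $r_n\psi'(\psi^\leftarrow(\log n))=\omega(\log\log n)$ enters. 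You instead set $R_n^{(0)}=R_n^{(1)}-s_n$ with an interpolating scale $s_n$ (e.g.\ the geometric mean of $r_n$ and $\log\log n/\psi'(\psi^\leftarrow(\log n))$), which makes $R_n^{(1)}-R_n^{(0)}=o(r_n)$ trivial and pushes the per-cell mass up to $e^{\omega(\log\log n)}$, so the union bound is beaten with a large margin rather than by a sharp constant-factor comparison; the same hypothesis enters in exactly one place, namely the existence of $s_n$, which you correctly identify as the crux. You also re-derive the void estimate at $R_n^{(1)}$ from the computation of Proposition~\ref{R_n^(1) light tail asymptotic} with the sign of $w(n)$ flipped, whereas the paper invokes Theorem~4.5 of the cited reference for this step; your version is more self-contained. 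Both arguments use the same ingredients ($Q'_{i,n}\subseteq Q_{i,n}\subseteq B(0,R_n^{(0)})$, monotonicity of the radial profile, regular variation of $\psi'$ and $\psi^\leftarrow$ to control all logarithmic factors by $O(\log\log n)$), and your slack-based choice of $R_n^{(0)}$ is a legitimate, arguably less delicate, alternative to the paper's tuned choice; the only thing it gives up is the more precise (but unneeded) calibration of the cell masses.
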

Before we give a proof of the theorem, we shall need to recall some Chernoff-type bounds for Poisson random variables (see Lemma $1.2$ in \cite{randGraphs}).

Let $N\sim Po(n)$ be a Poisson random variable with intensity $n$. For $x>0$ let
$$
H(x):=1-x+x\log x,
$$
and set $H(0):=1$.
We have the following Chernoff-type bounds
\begin{align*}
    \mathbb P\left(N\geq k\right)&\leq \exp\left(-nH\left(\frac{k}{n}\right)\right)\text{, }k\geq n;\\
    \mathbb P\left(N\leq k\right)&\leq \exp\left(-nH\left(\frac{k}{n}\right)\right)\text{, }k\leq n.
\end{align*}
\begin{proof}
As discussed in Section~\ref{sec: probability measures}, we may assume that asymptotically, $q$ is of the form $q(x)=C \exp(-\psi(|x|))$, where $C>0$ is a suitably normalizing constant, and $\psi\in RV_v$, $v>1$.

Let $R_n^{(0)}:=\psi^\leftarrow (A_n)$, where
$$
A_n:=\log n+d\log(r_n)+(d+2)\log(\gamma_n)-\log\log((\gamma_nr_n)^{-1}\psi^{\leftarrow}(\log n))-\delta,
$$
and $\delta$ is chosen such that
$$
(d-Ce^{\delta}/3)<0,
$$
and let $R_n^{(1)}:=\psi^\leftarrow(B_n)$, where
$$
B_n:=\log n+(d-1)\log \psi^\leftarrow(\log n)-\log\psi'(\psi^\leftarrow(\log n))+\log\log n.
$$
\begin{itemize}
\item One can verify as in the proof of Theorem $4.5$ in \cite{topoCrackle}, that w.h.p.
$$
\mathcal P_n\cap B(0,R_n^{(1)})^c=\emptyset,
$$
the choice of $R_n^{(1)}$ being the same.

\item Next, we show that $R_n^{(1)}-R_n^{(0)}=o(r_n)$. 

We have $B_n=\log n +O(\log \log n)$. This is because $\psi^\leftarrow \in RV_{1/v}$ and $\psi'\circ\psi^\leftarrow \in RV_{(1-v)/v}$, hence $\log \psi^\leftarrow (\log n) $ and $\log \psi'(\psi^\leftarrow(\log n))$ are $O(\log \log n).$

Likewise $A_n=\log n +O(\log \log n)$, since
\begin{align*}
(1<\psi'(\psi^\leftarrow(\log n)) \gamma_n r_n)&\Rightarrow (\log (1/\psi'(\psi^\leftarrow(\log n)))<\log \gamma_nr_n<0)\\
&\Rightarrow \log(r_n^d\gamma_n^{d+2})=O(\log \log n).
\end{align*}
Hence $B_n-A_n=O(\log\log n)$, and by the mean value theorem
$$
    R_n^{(1)}-R_n^{(0)}\sim(\psi^\leftarrow)'(\log n)(B_n-A_n)=o(r_n),
$$
since, by our assumptions,
$$
(\psi^\leftarrow)'(\log n)=\frac{1}{\psi'(\psi^\leftarrow(\log n))}=o(\frac{r_n} {\log\log n}).
$$

\item Finally, we show the concentration inequalities for the sampled points. 

Let $i\in S_n(R_n^{(0)}).$ By construction of $Q_{i,n}$, there exists a cube $Q'_{i,n}$ of side $\gamma_nr_n$, entirely contained in $B(0,R_n^{(0)})$ and such that
$
Q'_{i,n}\subset Q_{i,n}.
$
Hence
$$
\nu(Q_{i,n})\geq \nu(Q'_{i,n})\geq q(R_n^{(0)})(\gamma_nr_n)^d.
$$
We may Taylor expand $H$ around $1$ and find for sufficiently small $\abs{x}$, that
$$
H(1+x) > x^2/3.
$$
Thus, picking $n$ sufficiently large so that $H(1+\gamma_n)>(1/3)\gamma_n^2$, and using the above Chernoff-type bounds, we have for all $i\in S_n(R_n)$
\begin{align*}
    \mathbb P\left(\mathcal P_n(Q_{i,n})\geq(1+\gamma_n)n\nu(Q_{i,n})\right)&\leq \exp\left(-n\nu(Q_{i,n})H(1+\gamma_n)\right)\\
    &\leq\exp\left(-nq(R_n)r_n^d\gamma_n^{d+2}/3\right).
\end{align*}
The same bound can be found for $$
\mathbb P\left(\mathcal P_n(Q_{i,n})\leq(1-\gamma_n)n\nu(Q_{i,n})\right),
$$
thus, we find by a union bound
\begin{align*}
    \mathbb P\big(\exists i\in S_n(R_n^{(0)}),\ & \abs{\mathcal P_n(Q_{i,n})-n\nu(Q_{i,n})}>n\gamma_n \nu(Q_{i n})) \big)\\
    &\lesssim \left(\frac{R_n^{(0)}}{\gamma_nr_n}\right)^d\exp\left(-nq(R_n^{(0)})r_n^d\gamma_n^{d+2}/3\right).
\end{align*}
We have 
$$
\exp\left(-nq(R_n^{(0)})r_n^d\gamma_n^{d+2}/3\right) = \frac{Ce^\delta}{3}\log((\gamma_nr_n)^{-1}\psi^\leftarrow(\log n)),
$$
hence 
\begin{align*}
\left(\frac{R_n^{(0)}}{\gamma_nr_n}\right)^d\exp\left(-nq(R_n^{(0)}e_1)r_n^d\gamma_n^{d+2}/3\right)&=\exp\left((d-Ce^\delta/3)\log((\gamma_nr_n)^{-1}\psi^\leftarrow(\log n))\right)\\
&=o(1).
\end{align*}
\end{itemize}
\end{proof}

\section{Conclusion}
In this work we analysed connectivity properties of random geometric graphs on $\R^d$, spanning through the various possible decays of the sampling density (subexponential, exponential and superexponential). We found a dichotomy between the setting where the decay is exponential or slower, and the setting where the decay is faster than exponential. In the former case, we showed that random geometric graphs are disconnected w.h.p.\ under no or very mild constraints on the radius parameter of the graph. In the later case, we found a non-trivial threshold $\tau=\frac{\log\log n}{\psi'\psi^\leftarrow(\log n)}$, such that some concentration properties hold if $r_n=\omega(\tau)$, while the graph is disconnected w.h.p. if $r_n=o(\tau)$. We note that we have not given the exact threshold connectivity value for random geometric graphs, which occurs in the regime $r_n=\Theta(\tau)$. This regime remains to be studied.

Our analysis was motivated by the need to uncover a suitable setting under which geometric learning problems on $\R^d$, such as spectral clustering, can be shown to achieve consistency. The fact that random geometric graphs are typically disconnected if the sampling density has exponential decay or slower, suggests that we should focus our attention on the special setting where sampling densities have superexponential decay. Indeed, it is customary in many geometric learning problems to require the underlying graph to be connected. In this case it would be fruitless to study spectral clustering algorithms set on $\R^d$ unless the sampling density is assumed to have superexponential decay (e.g., a Gaussian). To our knowledge, it was not known before that a large of class of sampling densities on $\R^d$ were not advisable for spectral clustering. An important next step is to focus on the setting where the sampling density has superexponential decay, and find conditions on the radius parameter $r_n$ such that spectral clustering algorithms are consistent on $\R^d$.

\newpage
\bibliographystyle{plain}
\bibliography{refs}
\newpage

\end{document}